\pgfplotsset{compat=1.13}
\newcommand\cprime{$'$}
\newcommand\polymake{{\tt polymake}\xspace}
\newcommand\topcom{{\tt TOPCOM}\xspace}
\DeclareSymbolFontAlphabet{\mathbb}{AMSb}
\newcommand\CC{{\mathbb C}}
\newcommand\QQ{{\mathbb Q}}
\newcommand\RR{{\mathbb R}}
\newcommand\ZZ{{\mathbb Z}}
\newcommand\HS{\mathbb{HS}}
\newcommand\PP{{\mathbb P}}
\DeclareMathOperator{\conv}{conv}
\DeclareMathOperator{\Newt}{Newt}
\DeclareMathOperator{\Vol}{Vol}
\newcommand{\st}{\,\colon}
\newcommand\cT{\mathcal{T}}
\newcommand\cP{\mathcal{P}}
\theoremstyle{plain}
    \newtheorem{theorem}{Theorem}
	\newtheorem{algorithm}[theorem]{Algorithm}
    \newtheorem{corollary}[theorem]{Corollary}
    \newtheorem{lemma}[theorem]{Lemma}
    \newtheorem{proposition}[theorem]{Proposition}
\theoremstyle{definition}
    \newtheorem{example}[theorem]{Example}
    \newtheorem{definition}[theorem]{Definition}
\author[G.\,Balletti]{Gabriele Balletti}
\address[G.\,Balletti]{Stockholm University\\Sweden}
\email{balletti@math.su.se}
\author[M.\,Panizzut]{Marta Panizzut}
\address[M.\,Panizzut]{TU Berlin\\Germany}
\email{panizzut@math.tu-berlin.de}
\author[B.\,Sturmfels]{Bernd Sturmfels}
\address[B.\,Sturmfels]{MPI Leipzig\\Germany and UC Berkeley\\USA}
\email{bernd@mis.mpg.de}
\keywords{Tropical surfaces, reflexive polytopes, triangulations, surface singularities}
\subjclass[msc2010]{14T05, 14J10, 52B20}
\title{K3 Polytopes and their Quartic Surfaces}
\begin{document}

\begin{abstract}
K3 polytopes appear in complements of  tropical quartic surfaces.
They are dual to regular unimodular central triangulations of reflexive polytopes in the fourth dilation of the standard tetrahedron.
Exploring these combinatorial objects, we classify 
K3 polytopes with up to $30$ vertices. Their number is $36\,297\,333$.
We study the singular loci of quartic surfaces that tropicalize to K3 polytopes.
These surfaces are stable in the sense of Geometric Invariant Theory.
\end{abstract}
\maketitle

\section{Introduction} 

Tropical hypersurfaces are defined by tropical polynomials. They support pure rational weighted polyhedral complexes.  
The regions in the complement of a tropical hypersurface are convex polyhedra. These are interesting for a
range of problems in geometric combinatorics. If the  tropical polynomial is a product of linear forms,
so the hypersurface is a hyperplane arrangement, then the bounded regions are \emph{polytropes} \cite{JK10}.
These are the basic building blocks in tropical convexity \cite{DS04}, and they arise in contexts ranging from
affine buildings \cite{JSY07} and Coxeter arrangements \cite{LP07}  to combinatorial optimization \cite{JL16}. 
The combinatorial types of polytropes were  classified by Tran \cite{Tran17}. 

The point of departure for this article is Exercise 13 in \cite[Section 1.9]{MS15}. It asks to show
that the unique bounded region in the complement of a smooth cubic curve in the tropical plane is 
an $m$-gon, where $m \in \{3,4,5,6,7,8,9\}$, and each of these seven possibilities occurs.
The boundary of this convex $m$-gon carries the group structure of the {\em tropical elliptic curve},
and its lattice length is the {\em tropical j-invariant}. These results are due to
Vigeland \cite{Vig09} and Katz-Markwig-Markwig \cite{KMM08}.

Elliptic curves are Calabi-Yau varieties. In higher dimensions, these varieties occupy 
a prominent place at the crossroads of algebraic geometry and theoretical physics.
Following Batyrev \cite{Bat94}, reflexive polytopes capture the combinatorial essence of
mirror symmetry for Calabi-Yau hypersurfaces.

The title of this paper refers to the bounded region of a smooth tropical quartic surface.  We call such a region a \emph{K3 polytope}. The name  is motivated by the fact that a smooth quartic surface in $\mathbb{P}^3$ is a {\em K3 surface},
that is, a non-singular surface with trivial canonical bundle and trivial first cohomology group. 
In short, our topic is the above Exercise 13, but now in one higher dimension.

The study of smooth tropical quartic surfaces and K3 polytopes is dual to the study of regular unimodular triangulations of a Newton polytope with one interior lattice point and contained in the scaled tetrahedron $4\Delta_3$. A naive approach to our problem is to compute the secondary fan of $4\Delta_3$ and then to filter out the unimodular triangulations.
 However, this  is not feasible with the current state of software and algorithms.
The established tools are \texttt{gfan} \cite{gfan} and \topcom \cite{topcom}. They use 
different  algorithms to pass through cones of the secondary fan: \texttt{gfan} computes a new weight by traversing a facet, while \topcom exploits bistellar flips. Jordan, Joswig and Kastner  \cite{JJK17}  introduced a new algorithm,
called \emph{down-flip reverse search}, for parallel enumeration of regular triangulations. 
Their implementation \texttt{mptopcom} generated results that are out of reach for 
 \texttt{gfan}  and \topcom. We refer to the summary in \cite[Table~3]{JJK17}. They also report that
the number of regular triangulations of $4\Delta_3$ appears to be ``out of reach for the current implementations, including \texttt{mptopcom}''.

Our first main result  is  a practical algorithm for classifying K3 polytopes. We use this to establish
\begin{theorem} The following result concerns tropical quartic surfaces that have a bounded region:
\label{thm:1}
\begin{enumerate}[(a)]
\item There are $356\,461$ Newton polytopes, up to symmetry, arising from tropical quartic surfaces.
\item Among these Newton polytopes, precisely $15\,139$ arise from smooth tropical quartic surfaces.
\item  The f-vectors of K3 polytopes are the triples $\left(v,\frac{3}{2}v,\frac{1}{2}v + 2\right)$ 
where $v \in \{4,6,\ldots,50\} \cup \{54,56,64\}$.
\item There are $36\,297\,333$ K3 polytopes with $v \leq 30 $ vertices.
They are dual to the regular unimodular central triangulations of the Newton polytopes in (b) 
that have normalized volume at most $30$.
\end{enumerate}
\end{theorem}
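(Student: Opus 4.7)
The proof is a computer-assisted classification. By standard tropical geometry (\cite{MS15}), a tropical quartic surface with Newton polytope $P \subseteq 4\Delta_3$ has a bounded region in its complement exactly when $P$ has an interior lattice point, and each bounded region is dual to the star of such a point in the regular subdivision of $P$ induced by the coefficients. Smoothness of the surface corresponds to this subdivision being a unimodular triangulation of $P$. After translating the unique interior point to $\orig$, reflexivity of $P$ follows (each facet is at lattice distance $1$ from $\orig$), and a K3 polytope is precisely the dual cell at $\orig$ in a regular unimodular central triangulation of $P$. This recasts the theorem as a finite enumeration.

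For part (a), the plan is to enumerate all lattice polytopes $P \subseteq 4\Delta_3$ containing an interior lattice point, modulo the $S_4$-action on $4\Delta_3$: iterate over subsets of lattice points of $4\Delta_3$, take the convex hull, test the interior condition, and canonicalize under $S_4$. For part (b), filter to those $P$ that are reflexive and admit a regular unimodular triangulation, checked with \polymake and an LP feasibility test. For part (d), I would enumerate the regular unimodular central triangulations of each polytope from (b) with normalized volume at most $30$. The centrality hypothesis is crucial: every maximal simplex is the cone from $\orig$ over a boundary triangle, so a central triangulation is entirely determined by compatible unimodular triangulations of the polygonal facets of $P$, with regularity encoded by a single lifting LP. This reduces the triangulation problem from three dimensions to two, and the enumeration can be performed by a variant of down-flip reverse search (\cite{JJK17}) on the facet level. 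For each triangulation, the K3 polytope is reconstructed as the dual cell at $\orig$ and canonicalized under combinatorial equivalence.

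Part (c) follows from the duality. In any central unimodular triangulation, every maximal simplex is a lattice tetrahedron with $\orig$ as one vertex, so every vertex of the dual K3 polytope lies in exactly three edges; the K3 polytope is therefore simple. Combining $2e = 3v$ with Euler's formula $v - e + f = 2$ yields $e = \tfrac{3}{2}v$ and $f = \tfrac{1}{2}v + 2$. The explicit set of attained values of $v$ is read off from the enumeration in (d), extended for the values $v \in \{54,56,64\}$ by a targeted search among the polytopes in (b) of larger volume. The main obstacle throughout is computational: the secondary fan of $4\Delta_3$ is far too large to traverse directly, and even with the reduction to facet triangulations the enumeration in (d) strains current software. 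The bound $v \leq 30$ marks precisely where the combination of the centrality reduction, aggressive symmetry canonicalization, and parallelization remains tractable.
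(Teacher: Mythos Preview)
Your proposal is correct and follows the same overall computational strategy as the paper: enumerate canonical subpolytopes of $4\Delta_3$, filter to the reflexive ones, enumerate regular unimodular central triangulations by combining unimodular triangulations of the facets and testing regularity, and derive the f-vector shape from simplicity plus Euler's relation.

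Two procedural points differ from the paper and are worth knowing. For~(a), the paper does not iterate over subsets of the $35$ lattice points of $4\Delta_3$; it runs a top-down algorithm that starts from $4\Delta_3$ and repeatedly removes one vertex at a time, retaining a subpolytope only if $\mathbf{p}$ remains interior. This avoids the $2^{35}$ blow-up and the massive redundancy of many subsets having the same convex hull. For~(c), the paper makes explicit the key identity you use only implicitly: since the central triangulation is unimodular, the number of maximal tetrahedra equals $\Vol(P)$, so $v = f_0 = \Vol(P)$ depends only on the reflexive polytope and not on the triangulation. Hence the set of attained values of $v$ is exactly the set of normalized volumes occurring among the $15\,139$ polytopes in~(b), and the list in~(c) is read off directly from that census --- no appeal to the enumeration in~(d) and no separate ``targeted search'' for the large values $54,56,64$ is needed.
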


Our most relevant objects will be defined in Section~\ref{sec:zwei}.
Our computational proof of Theorem~\ref{thm:1}, presented in Section \ref{sec:hunt_K3},
 proceeds as follows. First of all, we list all lattice subpolytopes of $4\Delta_3$ that have
  an interior lattice point (Proposition~\ref{prop:canonical}). These are the Newton polytopes of tropical quartic surfaces
 that have a bounded region. If the quartic is also smooth, then that
    Newton polytope is a reflexive polytope (Proposition \ref{prop:bij_triangs}).
    The census of these polytopes is given in  Corollary \ref{cor:reflexive}.
By looking at the triangulations of these reflexive polytopes, we generate K3 polytopes. Specifically, the combinatorics of a K3 polytope is uniquely determined by the central part of a regular unimodular triangulation of $4\Delta_3$. We
 implemented a script to list  such triangulations in \polymake and \topcom.
 Table \ref{tab:dist_refl} summarizes the classification results we obtained.
 Full details and the source code for our computations are available at \url{https://github.com/gabrieleballetti/k3_polytopes}.

The bounded region of a tropical plane cubic identifies the j-invariant and hence
represents the curve in its tropical moduli space. Our ultimate hope for
K3 polytopes is that these can play a similar role for tropical moduli of quartic surfaces.
Our second result is a first step towards that goal. We study quartic surfaces whose Newton
polytope is one of the reflexive polytopes on our list.

The classical path towards moduli spaces is Geometric 
Invariant Theory \cite{Mum94}. In this setting one asks, for a given surface,
whether it is stable, semistable or unstable. We prove that all our quartic surfaces are stable,
provided their coefficients are generic relative to the reflexive Newton polytope.

\begin{theorem}
\label{thm:2}
Let $f \in \CC[x,y,z,w]$ be a homogeneous quartic whose Newton polytope
arises from a smooth tropical surface, as in Theorem~\ref{thm:1} (b).
Then the quartic surface $V(f)$ in $ \PP^3$ is stable.
\end{theorem}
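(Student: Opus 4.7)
My plan is to apply the Hilbert--Mumford numerical criterion for $SL(4)$-stability and translate it into the combinatorial condition that the point $(1,1,1,1)$ lies in the interior of the Newton polytope of every $SL(4)$-translate of $f$. I would begin by recalling the criterion: $V(f) \subset \PP^3$ is $SL(4)$-stable if and only if $\mu(f, \lambda) > 0$ for every nontrivial one-parameter subgroup $\lambda \colon \CC^* \to SL(4)$. Every such $\lambda$ is conjugate to a diagonal one-parameter subgroup $t \mapsto \operatorname{diag}(t^{a_1}, t^{a_2}, t^{a_3}, t^{a_4})$ with $\sum a_i = 0$ and $(a_i) \neq 0$. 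Using the standard invariance $\mu(g \cdot f, \lambda) = \mu(f, g^{-1} \lambda g)$, stability becomes the requirement: for every $g \in SL(4)$ and every nonzero weight vector $(a_i)$ with $\sum a_i = 0$, some monomial $x_1^{e_1} x_2^{e_2} x_3^{e_3} x_4^{e_4}$ in the support of $g \cdot f$ satisfies $\sum_i a_i e_i < 0$.

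Next I would translate into a statement about Newton polytopes. Since quartic monomials satisfy $\sum e_i = 4$ and the weights satisfy $\sum a_i = 0$, we may rewrite $\sum_i a_i e_i = \langle (a_i),\, (e_i) - (1,1,1,1) \rangle$. Thus the criterion becomes: for every $g \in SL(4)$, the point $(1,1,1,1)$ lies in the relative interior of $\Newt(g \cdot f) \subset 4\Delta_3$. For $g$ the identity this is immediate from Theorem~\ref{thm:1}(b), which tells us that $\Newt(f)$ is a reflexive polytope whose unique interior lattice point is $(1,1,1,1)$.

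The remaining case of general $g$ is where I expect the main difficulty. Under a non-diagonal coordinate change the Newton polytope of $g \cdot f$ may in principle shrink through coefficient cancellations, and one must show this cannot pull $(1,1,1,1)$ onto its boundary. My plan is to exploit the genericity hypothesis on the coefficients of $f$: the locus of quartics with given reflexive Newton polytope for which some $g \in SL(4)$ drives $(1,1,1,1)$ out of the interior of $\Newt(g \cdot f)$ is a proper Zariski-closed subvariety of the parameter space, and hence is avoided by generic $f$. An attractive alternative would be to combine a classical classification of semistable quartic surfaces (in the spirit of Shah's work) with an explicit description of the generic singularities of $V(f)$ in terms of the reflexive Newton polytope, in the same spirit as the singular-locus analysis carried out in the later sections of the paper.
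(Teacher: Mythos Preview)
Your translation of the Hilbert--Mumford criterion into the condition that $(1,1,1,1)$ lies in the interior of $\Newt(g\cdot f)$ for every $g\in SL(4)$ is correct; this is precisely Theorem~\ref{thm:mumford}. However, your main argument has a genuine gap at the decisive step. You assert that the locus of coefficient vectors $(\lambda_1,\ldots,\lambda_r)$ for which $f$ fails to be stable is a \emph{proper} Zariski-closed subset of the parameter space. Closedness is automatic (the stable locus is open in $\HS_{4,3}$, so its complement meets the linear system in a closed set), but \emph{properness is exactly the content of the theorem}: to show the bad locus is proper you must produce at least one stable $f$ with the prescribed Newton polytope, and you have offered no mechanism for doing so. The difficulty is real: for the minimal reflexive polytopes the generic $f$ is \emph{not} smooth---it is singular along the base locus $V(m_1,\ldots,m_r)$---so the cheap argument ``smooth quartics of degree $\geq 3$ are stable'' is unavailable, and no abstract dimension count on the non-stable locus will rescue you either.

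The paper follows the route you list only as an ``attractive alternative'', and that is where all the work lies. Via Corollary~\ref{cor:stability} the problem is reduced to the $115$ minimal reflexive subpolytopes of $4\Delta_3$ (Proposition~\ref{prop:minimal}). For each of these, the singular locus of the generic surface consists of isolated coordinate points of multiplicity two. The core of the proof is then a computer-assisted case analysis: using Arnol\cprime d's Splitting Lemma and the finite determinacy of simple singularities (Theorem~\ref{thm:class_and}), each singular point is brought to a normal form $A_k$, $D_k$, $E_6$, or $E_7$, while carefully tracking that no coefficient cancellation occurs for generic $\lambda_i$ (Table~\ref{tab:sing}). Shah's criterion that quartic surfaces with only rational double points are stable then finishes the argument. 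Nothing in your proposal substitutes for this singularity analysis.
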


The proof rests on Shah's characterization \cite{Shah} of stable quartic surfaces in terms of their singularities.
Our analysis of the singularities  exploits  results of Arnol\cprime d \cite{Arn72}
and  Mumford \cite{Mum77}. The latter allows us to check the stability only on reflexive polytopes that are minimal up to inclusion. 

\subsection*{Acknowledgements} We are very grateful to Michael Joswig for several inspiring discussions. We also thank Matteo Gallet, Lars Kastner and Benjamin Schr\"{o}ter for help with this project.
GB was partially supported by the Vetenskapsr\aa{}det grant NT:2014-3991. MP and BS acknowledge support by the Einstein Foundation Berlin, which also funded a visit of GB to TU Berlin.

\section{An Invitation to K3 polytopes}
\label{sec:zwei}

We begin with some basics from tropical geometry \cite{MS15}.
In the tropical semiring $\big(\mathbb{R} \cup \{\infty\}, \oplus, \odot\big)$, arithmetic is
  defined by $a \oplus b = \min \{a,b\}$ and $a \odot b = a+b$.  Consider  a tropical polynomial
\[f(x_1,  \ldots, x_n) \quad = \!\! \bigoplus_{\mathbf{v}=(v_1, \dots, v_n) \in \ZZ^n} \!\!\!\!\! c_{\mathbf{v}} \odot
 x_1^{\odot v_1} \odot \dots \odot x_n^{\odot v_n} \,\,\,= \,\,\,\bigoplus_{\mathbf{v} \in \ZZ^n} c_{\mathbf{v}} x^{\mathbf{v}}. \]
The \emph{tropical hypersurface $T(f)$} is defined as the set of points in $\RR^n$ at which the minimum among the quantities $c_{\mathbf{v}} + \mathbf{v} \cdot x$ is attained at least twice. The \emph{Newton polytope of $f$} is the lattice polytope 
\[ \text{Newt}(f) =  \text{conv} \big\{\mathbf{v} \, : \, c_{\mathbf{v}} \not = \infty \big\}. \]
Let $A = \text{Newt}(f) \cap \ZZ^n$ be its set of  lattice points. The coefficients of $f$ induce a \emph{regular subdivision} $\cT$ of $A$ by taking the convex hull in $\RR^{n+1}$ of the points $(\mathbf{v}, c_{\mathbf{v}})$ and projecting the lower faces to $\text{Newt}(f)$.
The coefficient vectors inducing the same subdivision $\cT$ 
form a relatively open polyhedral cone in $\RR^{|A|}$, called \emph{the secondary cone}.
 The tropical hypersurface $T(f)$ is dual to the subdivision $\cT$,
 and they determine each other \cite[Proposition 3.1.6]{MS15}. 
  We say that $T(f)$ is \emph{smooth} if the subdivision is a unimodular triangulation, i.e., 
  all simplices have normalized volume one.

The closures in $\RR^n$ of the connected components in
 the complement of a tropical hypersurface are called the \emph{regions of $T(f)$}. 
 These regions are convex polyhedra, either bounded or unbounded. 

Consider  the $(n+1)$-st dilation of the standard $n$-dimensional simplex, 
\[(n+1) \Delta_n \,\,=\,\,  \text{conv}\Big\{(0,0,\dots,0),(n{+}1,0,\ldots,0),(0,n{+}1, \ldots,0),(0,0,\ldots,n{+}1)\Big\}.\]
It has a unique interior lattice point $\mathbf{p} = (1,1,\ldots, 1)$.
  Let $T(f)$ be a smooth tropical hypersurface in $\RR^n$ of degree $n+1$. The Newton polytope $\text{Newt}(f)$ is contained in $(n+1) \Delta_n$. If the interior of $\text{Newt}(f)$ contains the point $\mathbf{p}$, then the 
  hypersurface $T(f)$ has   a bounded region in its complement.

The case $n=2$ corresponds to cubic curves \cite{KMM08, Vig09}.
We are here interested in the case $n=3$:
\[ f(x,y,z) \quad = \,\,\bigoplus_{i+j+k\leq 4} c_{ijk} \odot x^{\odot i} \odot y^{\odot j} \odot z^{\odot k}. \]
Suppose that the  tropical quartic surface $T(f)$ is smooth.
The Newton polytope $\text{Newt}(f)$ is a lattice polytope inside $4\Delta_3$.
We assume that it has $\mathbf{p}$ in its interior, so there is a bounded region.

\begin{definition} A $3$-dimensional polytope $\cP$ is a \emph{K3 polytope} if it is the closure of the unique bounded region in the complement of a smooth tropical quartic surface in $\RR^3$.
\end{definition} 

Every K3 polytope has a rational normal fan. This fan is simplicial because $T(f)$ is smooth.
Hence a K3 polytope is always simple, i.e.~each of its vertices is contained in exactly three edges. 

\begin{example}\label{es:bigK3} 
The following tropical polynomial defines a smooth tropical quartic surface:
$$ \begin{matrix}
f(x,y,z) & = & 5(x^4 \oplus y^4 \oplus z^4) \oplus 3(x^3y \oplus x^3z \oplus xy^3 \oplus y^3z \oplus xz^3 \oplus yz^3) \oplus 2( x^2y^2 \oplus x^2z^2 \oplus y^2z^2) \\ & &  \!\!
\oplus \,0( x^2yz \oplus xy^2z \oplus xyz^2 ) \oplus 3(x^3 \oplus y^3 \oplus  z^3) \oplus 0(x^2y \oplus x^2z \oplus xy^2 \oplus  y^2z \oplus xz^2 \oplus yz^2) \\ & & 
\oplus \,\,2(x^2 \oplus y^2 \oplus z^2) \,\oplus\,  0(xy \oplus xz   \oplus yz)\, \oplus \,3(x \oplus y \oplus z) \oplus  (-9xyz) \oplus 5.
\end{matrix}
$$
Note that ${\rm Newt}(f) = 4 \Delta_3$ has $\mathbf{p}$ in its interior.
The surface $T(f)$ is shown on the right in Figure \ref{fig:bigK3}, and its
 K3 polytope is shown on the left in Figure \ref{fig:bigK3}.
It is simple and has the f-vector $(64, \, 96, \, 34)$.  
\end{example}

\begin{center}
\begin{figure}[h]
 \includegraphics[width=5.6cm]{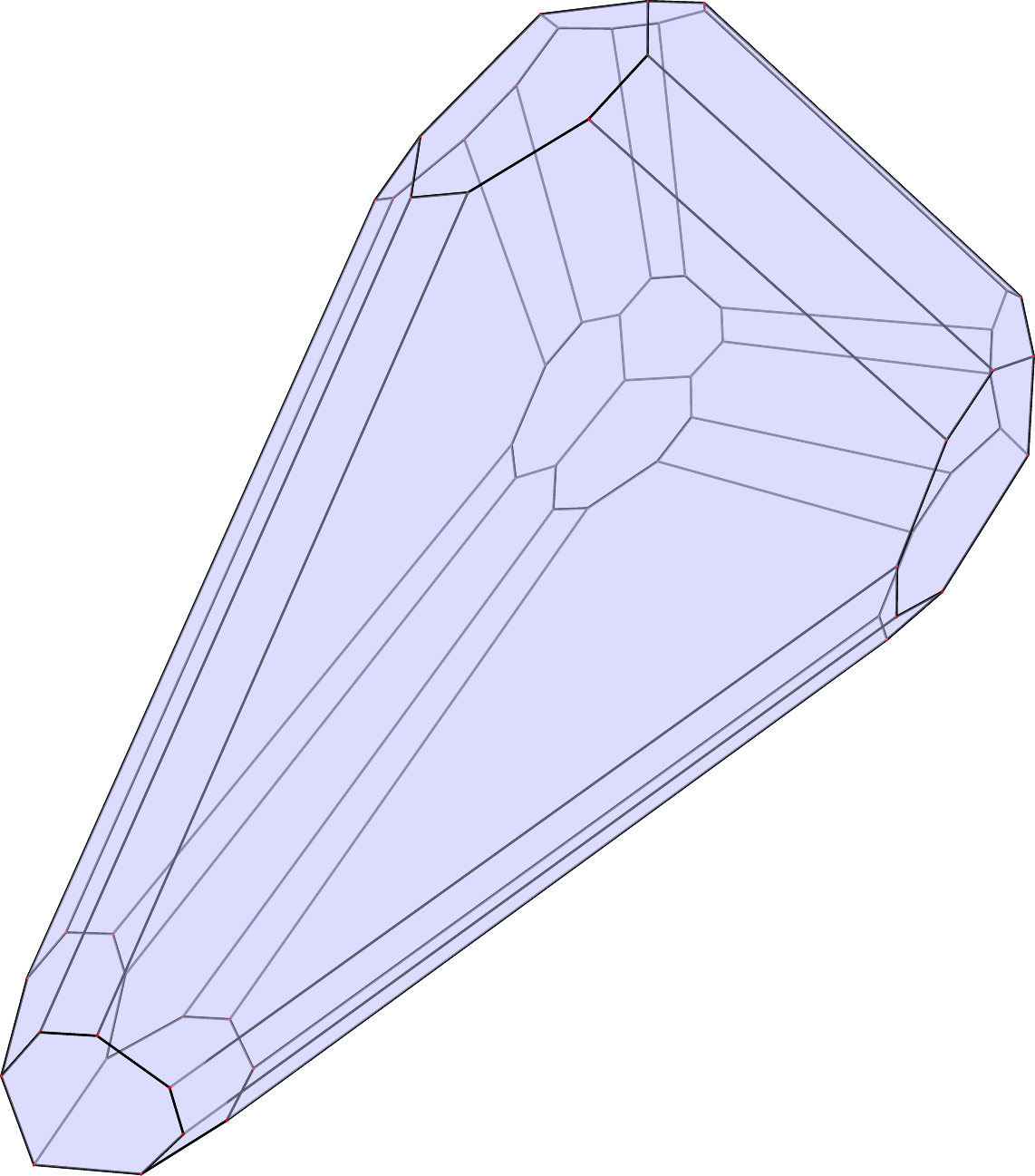}   \qquad
\includegraphics[width=6.5cm]{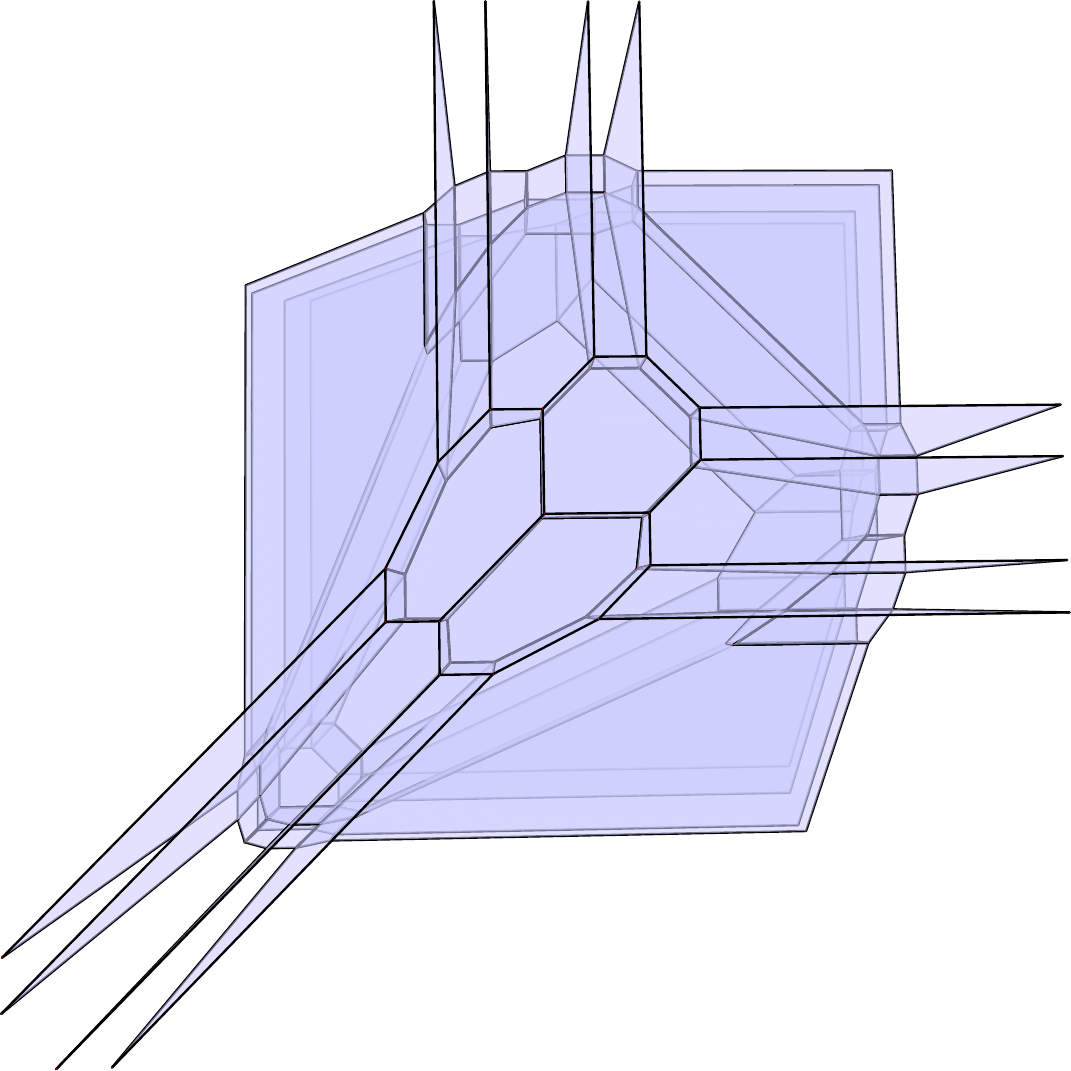} 
\caption{\label{fig:bigK3} The K3 polytope (left) arising from a tropical quartic surface (right).
} 
\end{figure}
\end{center}

\section{The hunt for K3 polytopes}\label{sec:hunt_K3}

We are interested in classifying K3 polytopes.   They are
 dual to regular unimodular triangulations of their Newton polytope. We first focus on the latter objects.
For basic definitions on triangulations we refer to \cite{DRS10}. By a {\em triangulation} of a lattice polytope $P$ we mean a triangulation of the point configuration given by the lattice points in $P$. Unimodular triangulations are particular \emph{fine triangulations}, i.e.~they do not admit any proper refinement. Borrowing some vocabulary from toric geometers, we say that a lattice polytope is \emph{canonical} if it has just one lattice point in its relative interior.
This corresponds to a toric Fano variety with at worst canonical singularities. Note that we do not assume that the interior point is the origin of the lattice, as it is usually assumed in the literature. 
If $P$ is a canonical polytope with interior lattice point $\mathbf{p}$ and $\cT$ is a triangulation of $P$, then the \emph{central part} of $\cT$ consists of the simplices of $\cT$ whose union with $\mathbf{p}$ is a simplex in $\cT$. This is 
also known as the \emph{star of $\mathbf{p}$ in $\cT$}. If $\cT$ coincides with its central part, then we call $\cT$ \emph{central}.

Any triangulation $\cT$ of $P$ induces a triangulation $\{ S \cap  \partial P \st S \in \cT \}$ of the boundary $\partial P$ of $P$. Conversely, any triangulation $\tau$ of $\partial P$ induces a central triangulation $\{ S \cup \{ \mathbf{p} \} \st S \in \tau \}$ of $P$. Thus, 
\begin{equation}
\label{eq:bij_triangs}
\{ \cT \st \text{$\cT$ is a central triangulation of $P$} \}\,\, \longleftrightarrow \,\,
\{ \tau \st \text{$\tau$ is a triangulation of $\partial P$} \}
\end{equation}
is a bijection.
The K3 polytope of $f$ is determined by the central part of a regular unimodular triangulation 
of ${\rm Newt}(f)$.
We thus ignore all triangulations that are not central. Indeed, the central part of such a triangulation 
will arise as a central triangulation of a smaller Newton polytope.

In the following subsections we  construct and classify K3 polytopes  as follows:
\begin{enumerate}
\item[\ref{ssec:canonical}] using \polymake \cite{polymake} we list all lattice polytopes $P \subseteq 4 \Delta_3$
with $\mathbf{p} \in {\rm int}(P)$ as above;
\item[\ref{ssec:reflexive}] we extract a sublist of those polytopes that admit a unimodular central triangulation;
\item[\ref{ssec:regular}] using \topcom \cite{topcom} we explore the regular unimodular central triangulations
 of the polytopes in the sublist above; each such triangulation determines one K3 polytope;
\item[\ref{ssec:fvectors}] the possible f-vectors of a K3 polytopes are described.
\end{enumerate}

\subsection{Newton polytopes of tropical quartic surfaces}
\label{ssec:canonical}
One can find the set $\mathcal{S}$ of all canonical Newton polytopes of 
quartic surfaces by starting from $4\Delta_3$ and progressively removing a vertex.

\begin{algorithm}
\label{al:trim}
\hspace{1pt}\\
INPUT: The polytope $4\Delta_3$.\\
OUTPUT: The set $\mathcal{S}$ of all $3$-dimensional canonical subpolytopes of $4\Delta_3$.
\begin{enumerate}[1.]
\item Set $\mathcal{S} \coloneqq \{ 4\Delta_3 \}$.
\item For $P \in \mathcal{S}$ and  each vertex $v$ of $P$, let $P_v \coloneqq \conv (P \cap \ZZ^3 
\backslash \{ v \})$. If ${\bf p} \in {\rm int}(P_v)$, add $P_v$ to $\mathcal{S}$.
\item If at least one polytope has been added to $\mathcal{S}$ in the last step then repeat step 2.
\end{enumerate}
\end{algorithm}

Our implementation of Algorithm~\ref{al:trim} in \polymake leads to the following result.

\begin{proposition}\label{prop:canonical}
Up to symmetry there are $356\,461$ canonical Newton polytopes of quartic surfaces.
\end{proposition}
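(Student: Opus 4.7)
The proposition is essentially a correctness statement for Algorithm~\ref{al:trim} combined with a computational count, so my plan has two parts: justify that the algorithm enumerates exactly the set $\mathcal{S}$ of all 3-dimensional canonical subpolytopes of $4\Delta_3$ (so that running it and then reducing modulo the symmetry group of $4\Delta_3$ gives the number in the statement), and report the outcome of the \polymake implementation.

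For correctness, termination is immediate because $\mathcal{S}$ is a subset of the (finite) collection of lattice subpolytopes of $4\Delta_3$, and each iteration strictly increases $\mathcal{S}$. Soundness is built into step 2: a polytope is added to $\mathcal{S}$ only if it contains $\mathbf{p}$ in its interior, and being the convex hull of a subset of $4\Delta_3 \cap \ZZ^3$ it is automatically a lattice subpolytope of $4\Delta_3$; three-dimensionality is preserved as long as $\mathbf{p}$ remains in the interior. The non-trivial part is completeness: every $3$-dimensional canonical subpolytope $Q \subseteq 4\Delta_3$ is reached. The key lemma I would prove is that, if $Q \subsetneq P$ are canonical lattice polytopes with the same interior point $\mathbf{p}$, then there exists a vertex $v$ of $P$ with $v \notin Q$, so that $Q \subseteq \conv(P \cap \ZZ^3 \setminus \{v\}) = P_v$ and $P_v$ is still canonical. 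Indeed, if every vertex of $P$ were contained in $Q$, convexity of $Q$ would force $P \subseteq Q$, contradicting $Q \subsetneq P$. Applying this iteratively, starting from $P = 4\Delta_3$, produces a chain $4\Delta_3 = P_0 \supsetneq P_1 \supsetneq \dots \supsetneq P_k = Q$ of canonical polytopes with $P_{i+1} = (P_i)_{v_i}$, which is precisely the type of chain explored by step 2 of the algorithm. Hence every canonical subpolytope appears in $\mathcal{S}$.

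To obtain the count up to symmetry, I would quotient by the automorphism group of $4\Delta_3$, namely $S_4$ acting on $\ZZ^3$ by permuting the four vertices of the tetrahedron (equivalently, permuting barycentric coordinates). Concretely, the algorithm can be maintained on representatives by storing lattice polytopes up to lattice isomorphism relative to this $S_4$-action and discarding isomorphic copies at each step; \polymake provides the necessary canonical-form routines. Since every orbit representative is reachable by some deletion sequence, the completeness argument above still applies after quotienting.

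The remaining step is the computation itself, which is where the practical obstacle lies: the raw number of intermediate polytopes before symmetry reduction is much larger than the final count, so the implementation must canonicalize on the fly and avoid enumerating the full (un-symmetrized) chain. Once this is handled, running the \polymake script described in the introduction yields the value $356\,461$ reported in the statement, which I would record together with a pointer to the public repository containing the code and the output.
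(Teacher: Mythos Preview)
Your approach matches the paper's: the paper's entire proof is the single sentence ``Our implementation of Algorithm~\ref{al:trim} in \polymake leads to the following result,'' so your correctness discussion already goes well beyond what is offered there.

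One small point worth tightening: in your key lemma you assert that $P_v$ is ``still canonical,'' but your argument only shows $\mathbf{p}\in\operatorname{int}(Q)\subseteq\operatorname{int}(P_v)$. In the abstract setting of the lemma (arbitrary canonical $Q\subsetneq P$) the polytope $P_v$ need not be canonical, since a boundary lattice point of $P$ can become interior after removing a vertex. What saves you here is the ambient constraint $P_v\subseteq 4\Delta_3$: any point of $\operatorname{int}(P_v)$ must lie in $\operatorname{int}(4\Delta_3)$ (a facet hyperplane of $4\Delta_3$ through a boundary point supports $P_v$ as well), and $\mathbf{p}$ is the unique interior lattice point of $4\Delta_3$. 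This also disposes of the soundness side you left implicit, namely that the algorithm's test $\mathbf{p}\in\operatorname{int}(P_v)$ really does guarantee canonicality. With that observation added, your argument is complete.
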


This proves Theorem~\ref{thm:1} (a).
Kasprzyk  \cite{Kas10}  classified all  $3$-dimensional canonical polytopes, so one could have attempted to deduce 
Proposition~\ref{prop:canonical} from his list. Kasprzyk's classification is 
up to affine unimodular equivalence, while for us it is preferable to work modulo
the symmetric group $S_4$. For this reason it is easier to generate
 all canonical subpolytopes of $4\Delta_3$ from scratch, via Algorithm~\ref{al:trim}.

\subsection{Reflexive Newton polytopes}
\label{ssec:reflexive}

We next incorporate the requirement that the tropical quartic surface is smooth.
Let $P \subset \RR^d$ be a $d$-dimensional lattice polytope with $k$ facets. We can write
\[
P = \{ \mathbf{x} \in \RR^d \st 
 A \mathbf{x} \geq \mathbf{c}
\},
\]
where $A$ is $k \times d$-matrix whose rows are primitive vectors in $\ZZ^d$
and ${\bf c} \in \ZZ^k$.
Suppose that $P$ has one interior lattice point $\mathbf{p}$. We say that $P$ is \emph{reflexive} if $A \mathbf{p} - \mathbf{c} = \mathbf{1}$, where $\mathbf{1}$ is the all-one vector $(1,\ldots,1)^t$. Reflexive polytopes are those canonical polytopes
where {\bf p} is in an adjacent lattice hyperplane to any facet.
They were introduced by Batyrev \cite{Bat94} within mirror symmetry and by Hibi \cite{Hib92} within combinatorics.
The polar of a reflexive polytope is again a lattice polytope, and it corresponds to
a Gorenstein toric Fano variety. If $P$ is reflexive, then the bijection \eqref{eq:bij_triangs} 
restricts to a bijection on  unimodular triangulations. Every fine triangulation of $\partial P$
is unimodular, since this holds for lattice polygons.
By putting these facts together, we obtain the following characterization.

\begin{proposition} \label{prop:bij_triangs}
A $3$-dimensional canonical lattice polytope $P$ is reflexive if and only if every central fine triangulation of $P$ is unimodular. 
\end{proposition}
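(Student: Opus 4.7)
The plan is to reduce the statement to a computation of the normalized volume of a single tetrahedron, using the bijection \eqref{eq:bij_triangs}. Any central triangulation of $P$ is the cone from $\mathbf{p}$ over a triangulation $\tau$ of $\partial P$, and it is fine precisely when $\tau$ uses every lattice point of $\partial P$. Since $P$ is three-dimensional, each facet $F$ is a two-dimensional lattice polygon, so its induced triangulation is fine. Any fine triangulation of a lattice polygon is unimodular, as every empty lattice triangle in $\RR^2$ has normalized area one (Pick's theorem). Hence for each triangle $T\in\tau$ contained in a facet $F$, the tetrahedron $\conv(\mathbf{p},T)$ has normalized volume equal to the lattice distance $d_F$ from $\mathbf{p}$ to the affine hull of $F$, because normalized volume factors as the product of the normalized area of the base and the integer height of the apex.

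For the forward direction, if $P$ is reflexive then $d_F=1$ for every facet $F$ by definition, so every tetrahedron in an arbitrary central fine triangulation of $P$ has normalized volume one and the triangulation is unimodular.

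For the reverse direction I argue by contrapositive. Suppose $P$ is not reflexive; then some facet $F$ has $d_F\geq 2$. It suffices to exhibit a single central fine triangulation of $P$ containing a tetrahedron over a triangle in $F$, since such a tetrahedron has normalized volume $d_F\geq 2$. To build one, I construct a fine triangulation of $\partial P$ by choosing a fine triangulation of each facet independently and coning from $\mathbf{p}$. This is well-defined because a one-dimensional lattice segment has a unique fine triangulation (subdivide at every lattice point), so the chosen facet triangulations automatically agree on their shared edges.

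The only step requiring real care is the compatibility of facet triangulations in the reverse direction, but this is dispatched by the uniqueness of fine triangulations in dimension one; the other ingredients are either the definition of reflexivity or the standard base-times-height formula for lattice volumes.
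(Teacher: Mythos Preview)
Your proof is correct and follows essentially the same approach as the paper, which sketches the argument in the paragraph immediately preceding the proposition: the bijection \eqref{eq:bij_triangs}, the fact that fine triangulations of lattice polygons are unimodular, and the lattice-distance-one characterization of reflexivity. You have simply made explicit the base-times-height volume computation and the contrapositive direction, which the paper leaves to the reader; a minor simplification is that in the reverse direction you need not construct a particular triangulation, since \emph{every} central fine triangulation already contains tetrahedra over the offending facet $F$.
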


\begin{table}[h]
\centering
\includegraphics{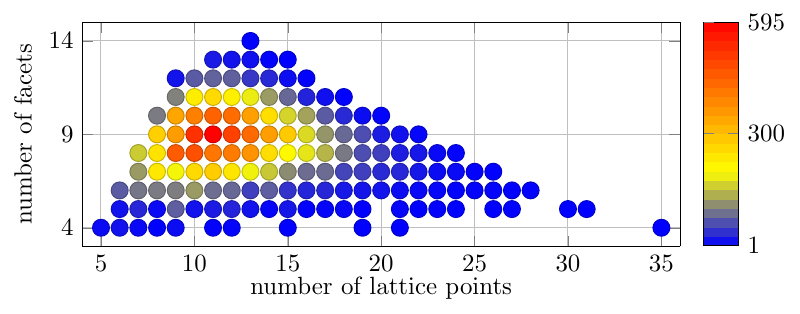}
\caption{\label{tab:dist_refl} Reflexive $3$-polytopes in $4 \Delta_3$  by number of facets and lattice points.}
\end{table}
 
We use Proposition~\ref{prop:canonical} to extract the list of $3$-dimensional reflexive subpolytopes of $4\Delta_3$. 
Note that reflexive polytopes up to dimension $4$ are fully classified in \cite{KS98,KS00}, but, as in the previous 
subsection, we work only up to $S_4$-symmetry, and it is easier to obtain complete lists from Proposition~\ref{prop:canonical}.

\begin{corollary}
\label{cor:reflexive}
Up to $S_4$-symmetry there are $15\,139$ reflexive  $3$-polytopes which are contained in $4\Delta_3$.
\end{corollary}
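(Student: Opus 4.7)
The plan is to realize Corollary~\ref{cor:reflexive} as a routine filtration of the list produced for Proposition~\ref{prop:canonical}. Proposition~\ref{prop:canonical} already gives us, up to the $S_4$-action permuting the vertices of $4\Delta_3$, the $356\,461$ three-dimensional lattice subpolytopes $P \subseteq 4\Delta_3$ containing the point $\mathbf{p} = (1,1,1)$ in their relative interior. Reflexivity is a property that depends only on $P$ (and not on the presentation), so it is invariant under the $S_4$-action, and therefore it can be tested on one representative from each orbit without affecting the final count.

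First I would iterate over the $356\,461$ representatives. For each $P$, I would use \polymake to compute the facet description
\[
P \,=\, \{\mathbf{x} \in \RR^3 \st A\mathbf{x} \geq \mathbf{c}\},
\]
with the rows of $A$ primitive in $\ZZ^3$ and $\mathbf{c} \in \ZZ^k$. Then I would evaluate the vector $A\mathbf{p} - \mathbf{c}$, recording the polytope as reflexive exactly when this vector equals the all-one vector $\mathbf{1}$. Geometrically this is the statement that the interior lattice point $\mathbf{p}$ lies at lattice distance one from every facet, i.e.~the interior lattice point sits in a lattice hyperplane adjacent to each facet. Equivalently, one could translate so that $\mathbf{p}$ becomes the origin and check whether the polar $P^\circ$ is a lattice polytope, which is the more standard formulation of Batyrev's definition \cite{Bat94}.

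Summing the reflexive verdicts over the $356\,461$ orbits yields the count $15\,139$. No part of the argument is mathematically delicate; the only obstacle is implementational, namely making sure that the same $S_4$-orbit representatives are used as in Proposition~\ref{prop:canonical} and that the primitive facet normals are computed consistently, so as to avoid either double-counting or missing orbits. Both tasks are standard in \polymake, and the classification of canonical, respectively reflexive, three-polytopes in \cite{Kas10, KS98} provides an independent consistency check for the output (after translating between $S_4$-symmetry and unimodular equivalence).
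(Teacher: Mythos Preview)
Your proposal is correct and matches the paper's approach: the paper likewise obtains Corollary~\ref{cor:reflexive} by filtering the $356\,461$ canonical polytopes of Proposition~\ref{prop:canonical} for reflexivity, noting (as you do) that working up to $S_4$-symmetry makes it easier to generate the list from scratch than to extract it from the Kreuzer--Skarke classification. Your explicit description of the reflexivity test via $A\mathbf{p}-\mathbf{c}=\mathbf{1}$ is exactly the definition the paper uses, so there is nothing to add.
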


The distribution of our reflexive $3$-polytopes by their number of lattice points and
their number of facets is  shown in a heat map in Table~\ref{tab:dist_refl}.
This computation proves Theorem~\ref{thm:1} (b).

\subsection{Regular triangulations}
\label{ssec:regular}

We now apply \topcom \cite{topcom} to the list of polytopes  in Corollary~\ref{cor:reflexive}.
Let $P$ be one of them. We first compute all unimodular central triangulations of  $P$,
and then we filter out the non-regular ones using \topcom. One can find all unimodular central triangulations of $P$ simply by iterating over all unimodular triangulations of each facet of $P$. The union of such triangulations is a unimodular triangulation of $\partial P$ which induces a unimodular central triangulation of $P$. 
For each such triangulation of $P$, we then check for regularity with \polymake.

The number of regular triangulations appears to grow exponentially with the number of lattice points 
(see Table~\ref{tab:regtriangs_refl}), making this classification infeasible. We computed all regular 
unimodular central triangulations
for more than $96 \%$ of the total number of reflexive polytopes of Corollary~\ref{cor:reflexive}. 
We stopped after this, as a complete classification is out of reach. In total we calculated 
 $36 \; 297 \; 333$ different regular unimodular central triangulations, each of them 
 corresponding to a K3 polytope.
\begin{table}[h]
\centering
\includegraphics{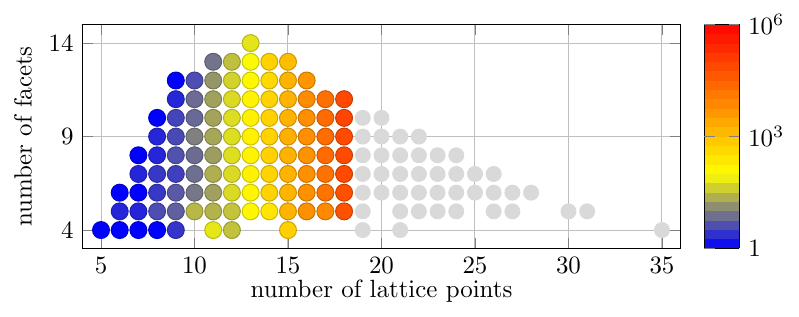}
\vspace{-.2cm}
\caption{\label{tab:regtriangs_refl}
The average number of regular unimodular central triangulations 
of the reflexive polytopes represented by each of the points of Table~\ref{tab:dist_refl}.}
\end{table}

Table \ref{tab:regtriangs_refl} indicates the number of regular unimodular central 
triangulations of our reflexive polytopes with up to $18$ lattice points. 
This constraint is equivalent to having normalized volume at most $30$. 
From this we obtain part (d) of Theorem~\ref{thm:1}, here restated as follows:

\begin{corollary}\label{cor:partial}
The reflexive polytopes of volume $\leq 30$ in Corollary~\ref{cor:reflexive} admit 
a total of $36\,297\,333$ regular unimodular central triangulations.
 Every K3 polytope with $\leq 30$ vertices  arises from one of these. 
In the table below, these K3 polytopes are counted 
according to their numbers of vertices:
\end{corollary}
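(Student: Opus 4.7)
The plan is to combine the duality between K3 polytopes and central unimodular triangulations with a simple volume count, and then to appeal to the enumeration procedure of Section~\ref{ssec:regular} on the relevant subset of reflexive polytopes from Corollary~\ref{cor:reflexive}.

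First I would establish that the number of vertices $v$ of a K3 polytope $\cP$ equals the normalized volume of its underlying Newton polytope $P$. By the duality between a smooth tropical hypersurface and its dual unimodular triangulation $\cT$, the vertices of $\cP$ correspond to the maximal simplices of $\cT$ incident to the interior lattice point $\mathbf{p}$. Since $\cT$ is central, every maximal simplex is incident to $\mathbf{p}$, and since $\cT$ is unimodular, each such simplex has normalized volume $1$. Hence $v = \Vol(P)$.

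Consequently, K3 polytopes with $v \leq 30$ correspond exactly to reflexive polytopes $P \subseteq 4\Delta_3$ with $\Vol(P) \leq 30$; by the volume-to-lattice-point relation visible in Table~\ref{tab:dist_refl}, this amounts to those polytopes in Corollary~\ref{cor:reflexive} with at most $18$ lattice points. For each such $P$ I would then run the enumeration described in Section~\ref{ssec:regular}: list the unimodular triangulations of each facet (each facet is a $2$-dimensional lattice polygon, for which fine equals unimodular), glue them into unimodular triangulations of $\partial P$, cone with $\mathbf{p}$ via the bijection~\eqref{eq:bij_triangs}, and filter for regularity using \topcom and \polymake. Summing across polytopes yields the total $36\,297\,333$, and sorting by $v = \Vol(P)$ produces the table; every K3 polytope with at most $30$ vertices is thereby accounted for, since its Newton polytope has volume equal to its vertex number.

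The main obstacle is computational rather than conceptual: the number of central triangulations grows rapidly with $\Vol(P)$, and the regularity check through \topcom becomes the runtime bottleneck as $v$ approaches $30$. Additional care is required to remain consistent with the $S_4$-symmetry quotient used throughout Section~\ref{sec:hunt_K3}, so that neither the Newton polytopes nor their triangulations are double-counted under affine unimodular equivalences respecting the ambient simplex $4\Delta_3$.
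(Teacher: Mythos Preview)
Your proposal is correct and matches the paper's own approach: the paper likewise relies on the identity $f_0 = \Vol(P)$ (stated as Lemma~\ref{lem:fvector}) to restrict attention to reflexive polytopes with at most $18$ lattice points, and then performs exactly the facet-by-facet enumeration of unimodular boundary triangulations, coning with $\mathbf{p}$ via~\eqref{eq:bij_triangs}, and regularity filtering that you outline. The result is a computational census, and your description of the procedure and its bottlenecks is accurate.
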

\begin{center}
\begin{tabular}{ |c|c||c|c| } 
\hline
\# vertices & \# triangulations & \# vertices & \# triangulations \\
\hline
4	&	$9$			&	18	&	$106 \; 049$		\\
6	&	$117$		&	20	&	$266 \; 206$		\\
8	&	$561$		&	22	&	$634 \; 228$		\\
10	&	$2 \; 065$	&	24	&	$1 \; 582 \; 156$	\\
12	&	$6 \; 261$ 	&	26	&	$3 \; 564 \; 613$	\\
14	&	$16 \; 523$	&	28	&	$9 \; 341 \; 111$	\\
16	&	$42 \; 780$	&	30	&	$20 \; 734 \; 654$	\\
\hline
\end{tabular}
\end{center}

\bigskip

We also examined the different combinatorial types of K3 polytopes with up to $18$ vertices:

\begin{corollary} \label{cor:comb_types} The K3 polytopes with most $18$ vertices have 
$764$ distinct vertex-facet incidence graphs. Their number, for each of the possible numbers of vertices, is listed in the table below:
\end{corollary}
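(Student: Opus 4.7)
The plan is to derive this corollary directly from the enumeration produced for Corollary~\ref{cor:partial}. We already have, for each reflexive polytope $P$ of normalized volume at most $30$, the complete list of regular unimodular central triangulations of $P$, and each such triangulation $\cT$ gives rise to a K3 polytope $\cP_{\cT}$ via the duality described in Section~\ref{sec:zwei}. Restricting to triangulations of the reflexive $P$ of volume at most $18$ (equivalently, those giving K3 polytopes with at most $18$ vertices), we obtain the $9 + 117 + \cdots + 106\,049$ K3 polytopes counted in the corresponding rows of the table of Corollary~\ref{cor:partial}.

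First I would construct, for each such triangulation $\cT$, the dual K3 polytope $\cP_{\cT}$ explicitly by reading off its vertices from the tetrahedra containing $\mathbf{p}$ in the central part of $\cT$, and assembling the face lattice from the incidence between these tetrahedra and the triangles of $\partial P$. From the face lattice I extract the vertex-facet incidence matrix, which is the bipartite graph whose vertices are the vertices and facets of $\cP_{\cT}$ and whose edges record containment. Two simple polytopes are combinatorially isomorphic if and only if their vertex-facet incidence graphs are isomorphic (an Eulerian theorem for $3$-polytopes which, in fact, can be read off from the $f$-vectors already determined in Theorem~\ref{thm:1}(c)), so counting isomorphism classes of these bipartite graphs is the same as counting combinatorial types of K3 polytopes.

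Next I would compute a canonical form of each bipartite incidence graph using a standard graph-isomorphism engine such as \texttt{nauty} (accessible through \polymake's \texttt{CANONICAL\_HASH} or directly), and insert the canonical string into a hash table keyed by the number of vertices. For each new canonical form encountered, increment the counter for that vertex count; for each form already present, discard the duplicate. After running this deduplication pass over the full list of triangulations for every reflexive polytope of volume $\leq 18$, sum the counters and verify the total $764$.

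The main obstacle is computational rather than mathematical: the number of raw triangulations grows quickly, peaking at $106\,049$ for the $18$-vertex stratum, and for each one must construct the face lattice and run a canonical-form computation. To keep this manageable I would parallelize over the reflexive polytopes and exploit the fact that many triangulations of the same $P$ share large chunks of their boundary subdivisions, so intermediate data can be cached. A sanity check is provided by the previously computed $f$-vectors from Theorem~\ref{thm:1}(c): every canonical graph must have vertex-count $v$ and facet-count $\tfrac{v}{2}+2$, and any deviation would signal a bug. Full source code and the resulting lists of canonical incidence graphs are made available in the repository referenced in Section~\ref{sec:hunt_K3}, so that the count of $764$ and the per-vertex breakdown in the table can be independently reproduced.
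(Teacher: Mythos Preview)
Your proposal is correct and matches the paper's approach: the corollary is a purely computational result extracted from the enumeration already obtained for Corollary~\ref{cor:partial}, and the paper offers no further argument beyond stating that the combinatorial types were examined. Your description of building the vertex--facet incidence data from each triangulation and deduplicating via canonical forms is exactly the natural implementation; the only quibble is the parenthetical remark that the reconstruction of combinatorial type from vertex--facet incidences ``can be read off from the $f$-vectors,'' which is not what you mean and should be dropped, but this does not affect the validity of the method.
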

\begin{center}
\begin{tabular}{ |c|c||c|c| } 
\hline
\# vertices & \# incidence garphs & \# vertices & \# incidence graphs \\
\hline
4	&	$1$		&	12	&	14		\\
6	&	$1$		&	14	&	44	\\
8	&	$2$		&	16	&	158	\\
10	&	$5$	&	18	&	539	\\
\hline
\end{tabular}
\end{center}
%
%

\subsection{f-vectors of K3 polytopes}
\label{ssec:fvectors}

The f-vector of a $3$-dimensional polytope  is the triple $f= (f_0, f_1, f_2)$
where $f_0, f_1$ and $f_2$ are the numbers of its  vertices, edges and facets. 
Euler's relation states that $f_0-f_1+f_2 = 2$. If the polytope is simple
then $3 f_0 = 2 f_1$. This holds for K3 polytopes.

The f-vector of a K3 polytope depends only on the polytope $P$ inside $4\Delta_3$ from which it originates.
Namely, $f_i$ counts the $(3-i)$-dimensional interior simplices in a unimodular triangulation of~$P$.

\begin{lemma}
\label{lem:fvector}
Consider the K3 polytope dual to a  regular unimodular central triangulation
of a relexive polyope $P$ in $4 \Delta_3$. The entries of the f-vector 
of this K3 polytope are
\[f_0 = \Vol(P) ,\qquad f_1 = \frac{3}{2} \Vol(P), \qquad f_2 = \frac{1}{2} \Vol(P) + 2.
\]
In particular, every K3 polytope has an even number of vertices.
\end{lemma}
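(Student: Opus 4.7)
The plan is to translate the f-vector of the K3 polytope into face-counts of the associated triangulation via tropical/polytopal duality, then apply Euler's formula on the boundary sphere.

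First I would set up the duality. Writing $\cP$ for the K3 polytope dual to a regular unimodular central triangulation $\cT$ of the reflexive polytope $P$, the bounded region $\cP$ is the cell of the tropical hypersurface $T(f)$ dual to the interior lattice point $\mathbf{p} \in P$. Under the correspondence between faces of the tropical hypersurface and simplices of the dual subdivision (\cite[Proposition 3.1.6]{MS15}), the $k$-dimensional faces of $\cP$ are in bijection with the $(3-k)$-dimensional simplices of $\cT$ that contain $\mathbf{p}$ in their relative interior or as a vertex. Because $\cT$ is central, every maximal simplex of $\cT$ is a tetrahedron with apex $\mathbf{p}$. Thus
\begin{align*}
f_0(\cP) &= \#\{\text{tetrahedra of }\cT\}, \\
f_1(\cP) &= \#\{\text{triangles of }\cT\text{ containing }\mathbf{p}\}, \\
f_2(\cP) &= \#\{\text{edges of }\cT\text{ through }\mathbf{p}\}.
\end{align*}

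Next I would use the bijection \eqref{eq:bij_triangs} between central triangulations of $P$ and triangulations $\tau$ of $\partial P$: under coning with $\mathbf{p}$, tetrahedra of $\cT$ correspond to triangles of $\tau$, interior triangles of $\cT$ containing $\mathbf{p}$ correspond to edges of $\tau$, and edges of $\cT$ through $\mathbf{p}$ correspond to vertices of $\tau$. Writing $(V,E,T)$ for the face numbers of $\tau$, this identifies $(f_0,f_1,f_2) = (T,E,V)$.

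Now $\partial P$ is homeomorphic to $S^2$, so Euler's formula gives $V - E + T = 2$. Every triangle of $\tau$ has three edges and every edge lies in two triangles (boundary spheres have no boundary), so $2E = 3T$, i.e.\ $E = \tfrac{3}{2} T$. Substituting into Euler yields $V = \tfrac{1}{2} T + 2$. Finally, since $\cT$ is unimodular, each of its $T$ tetrahedra has normalized volume one, so $T = \Vol(P)$. Combining these identities gives the three claimed values of the f-vector. The evenness of $f_0$ is then immediate from $f_0 = \Vol(P) = 2(f_2 - 2)$.

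The only delicate point is the first step, namely pinning down exactly which simplices of $\cT$ correspond to which faces of the single bounded region $\cP$; once that dictionary is in place the argument is purely a Euler-formula computation on the triangulated boundary sphere, which I do not expect to pose any difficulty.
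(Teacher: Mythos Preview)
Your argument is correct and matches the paper's approach: the paper states (just before the lemma) that $f_i$ counts the $(3-i)$-dimensional interior simplices of the triangulation, which for a central triangulation are exactly the simplices through $\mathbf{p}$, and it has already recorded that K3 polytopes are simple ($3f_0 = 2f_1$) and satisfy Euler's relation. Your proof simply spells out these same ingredients via the boundary triangulation $\tau$ of $\partial P \cong S^2$, which is the natural way to make the counting explicit; the paper leaves the lemma without a written proof, so your version is a faithful elaboration rather than a different route.
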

 
Theorem~\ref{thm:1} (c) now follows from
Lemma~\ref{lem:fvector} together with the census in Corollary~\ref{cor:reflexive}.
 Table \ref{tab:fvectors} comprises all the possible f-vectors that a K3 polytope can have.
 Each f-vector appears together with the number $\# P$ of relexive polytopes 
 it arises from. These numbers add up to  15 139.

\begin{table}[h]
\begin{tabular}{ |c|c||c|c||c|c| }
 \hline
f-vector & \# $P$ & f-vector & \# $P$ & f-vector & \# $P$ \\
\hline
(4, 6, 4) & 9 & (22, 33, 13) & 1248  &  (40, 60, 22) & 27 \\

(6, 9, 5) & 102 & (24, 36, 14) & 922  & (42, 63, 23) & 18 \\

(8, 12, 6) & 412 & (26, 39, 15) & 628  & (44, 66, 24) & 7 \\

(10, 15, 7) & 959 & (28, 42, 16) & 465  & (46, 69, 25) & 9 \\

(12, 18, 8) & 1642 & (30, 45, 17) & 295 & (48, 72, 26) & 2 \\ 
(14, 21, 9) & 2083 & (32, 48, 18) & 203 & (50, 75, 27) & 2 \\

(16, 24, 10) & 2194 &  (34, 51, 19) & 128  & (54, 81, 29) & 1 \\ 
(18, 27, 11) & 1997 & (36, 54, 20) & 85  & (56, 84, 30) & 1 \\
(20, 30, 12) & 1646 & (38, 57, 21) & 53 & (64, 96, 34) & 1 \\
 \hline
\end{tabular}
\smallskip
\caption{\label{tab:fvectors}Admissible f-vectors of K3 polytopes.}
\end{table}
 
\section{Singularities of quartic surfaces}
\label{sec:four}

We now leave the tropical setting, and we consider 
(the moduli space of) quartic surfaces in complex projective space.
We shall examine our reflexive polytopes through the lens of Geometric Invariant Theory \cite{Mum94}. We study general quartic surfaces with a fixed reflexive Newton polytope.

Consider the space $ \text{HS}_{4,3} = \CC[x,y,z,w]_4$ of all quartic polynomials
with complex coefficients,
\[f(x, y, z, w) \quad = \sum_{i+ j+ k\leq 4} c_{ijk} x^i y^j z^k w^{4-i-j-k}.\]
The variety $V(f)$ defined by such a polynomial is a quartic surface in $\PP^3$.
We write $\mathbb{HS}_{4,3}$ for the $34$-dimensional projective space $\PP(\text{HS}_{4,3})$
of all quartic surfaces.
 The special linear group $\text{SL}(4)$ acts on $\mathbb{HS}_{4,3}$,
 and on the associated polynomial ring $\CC[\HS_{4,3}]$, generated by $35$ unknowns  $C  = (c_{ijk})$.

\begin{definition} Let $F(C)$ be a  polynomial in the $\CC$-algebra $\CC[\HS_{4,3}]$. Then $F(C)$ is called \emph{invariant} if $g.F(C)  = F(C)$ for all $g \in \text{SL}(4)$. We denote by $\CC[\HS_{4,3}]^{\text{SL(4)}}$ the subalgebra of invariants. 
\end{definition}

The \emph{moduli space of quartic surfaces in $\PP^3$} is the projective variety 
determined by this invariant ring, namely $\,{\rm Proj} \bigl(\CC[\HS_{4,3}]^{\text{SL(4)}}\bigr)$.
Following Mumford \cite{Mum77, Mum94}, we give the following definitions. 

\begin{definition} Let $f$ be an element of $\HS_{4,3}$. We say that 
\begin{itemize}
\item $f$ is \emph{stable} if the orbit $O(f)^{\text{SL}(4)}$ is closed and the stabilizer $\text{stab}(f)$ is finite;
\item $f$ is \emph{semistable} if the closure of the orbit $O(f)^{\text{SL}(4)}$ does not contain the point $0$;
\item $f$ is \emph{unstable} if the closure of the orbit $O(f)^{\text{SL}(4)}$ contains the point $0$.
\end{itemize}
\end{definition}
We use the notation $\HS_{4,3}^{s}$ and $\HS_{4,3}^{ss}$ to denote the set of stable and semistable points
respectively.
The {\em GIT quotient} of the action of $\text{SL}(4)$ is defined on the semistable locus $\HS_{4,3}^{ss}$, as follows:
\[\varphi \, : \, \HS_{4,3}^{ss} \,\,\rightarrow \,\,\HS_{4,3}/\!/ \text{SL}(4)\,:= \,\text{Proj}\bigl(\CC[\HS_{4,3}]^{\text{SL(4)}}\bigr).  \]
 The image $\varphi(\HS_{4,3}^s)$ of the stable locus is the \emph{moduli space of stable quartic surfaces in $\PP^3$}.

Determining stable and semistable points is therefore a key step in the construction of moduli spaces via
   Geometric Invariant Theory. This task is deeply connected with the study of singularities. For example, it is 
   known that all nonsingular hypersurfaces in $\PP^n$ of
    degree $d \geq 2$ (respectively $d\geq 3$) are semistable (stable). This follows from the fact that the discriminant 
     is invariant under the action of $\text{SL}(n+1)$. Another classical result states that a plane cubic $(d=3,n=2)$
       is unstable if the curve has a triple point, a cusp or two components tangent to a point.
If it has an ordinary double point then it is semistable but not stable.
The right diagram in Figure \ref{fig:mumford} indicates this.

In \cite{Shah}, Shah describes whether a quartic surface is stable, semistable or unstable by looking at the type of singularities it has. As it is summarized in \cite{Mum77}, a quartic surface is stable if and only if
\begin{itemize}
\item its singular locus contains at most rational double points, and ordinary double curves possibly with pinch points, but not double lines;
\item  in the case when the singular locus is reducible, there is no plane as component,
 and there are no multiple components.
\end{itemize} 


The Hilbert-Mumford Criterion \cite[Theorem 2.1]{Mum94} states that, after a linear change of coordinates,
 the stability of a surface $f$ in $\HS_{4,3}$ can be checked by looking at its Newton polytope Newt$(f)$.
Following  \cite[\S 1.14]{Mum77}, we must look at the planes that pass through the point $\mathbf{p} = (1,1,1,1)$.

\begin{theorem}[Mumford \cite{Mum77}]
	
\label{thm:mumford}
A point $f$ in $\HS_{4,3}$ is stable if and only if, 
for every choice of coordinates, and for all planes $H$ through $\mathbf{p}$,  
each open halfspace of $H$ contains a monomial of $f$.
\end{theorem}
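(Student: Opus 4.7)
The plan is to derive the theorem from the Hilbert--Mumford numerical criterion applied to the $\text{SL}(4)$-action on $\HS_{4,3}$. Recall that $f \in \HS_{4,3}$ is stable if and only if the numerical invariant $\mu(f, \lambda)$ is strictly positive for every nontrivial one-parameter subgroup $\lambda$ of $\text{SL}(4)$, where $\mu(f, \lambda)$ is the negative of the minimum weight occurring in $f$ under $\lambda$. To use this I would first reduce to the standard diagonal torus: every 1-PSG of $\text{SL}(4)$ is conjugate, via some $g \in \text{SL}(4)$, to one of the form $\lambda_{\mathbf{r}}(t) = \text{diag}(t^{r_0}, t^{r_1}, t^{r_2}, t^{r_3})$ with $\mathbf{r} = (r_0, r_1, r_2, r_3) \in \ZZ^4$ and $\sum r_i = 0$. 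Equivariance of $\mu$ gives $\mu(f, g\lambda_{\mathbf{r}} g^{-1}) = \mu(g^{-1}.f, \lambda_{\mathbf{r}})$, so checking stability for all 1-PSGs amounts to checking stability under all diagonal 1-PSGs \emph{after every linear change of coordinates}; this is where the ``for every choice of coordinates'' clause enters the statement.

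Next I would unpack the weights in the diagonal torus. Under $\lambda_{\mathbf{r}}$ the monomial $x^i y^j z^k w^{4-i-j-k}$, with exponent vector $\mathbf{v} = (i, j, k, 4{-}i{-}j{-}k)$, is an eigenvector of weight $\mathbf{v}\cdot\mathbf{r}$. Therefore $\mu(f,\lambda_{\mathbf{r}}) > 0$ holds precisely when some monomial appearing in $f$ has weight $\mathbf{v}\cdot\mathbf{r} < 0$. Since the family $\{\lambda_{\mathbf{r}}\}$ is closed under $\mathbf{r} \mapsto -\mathbf{r}$, demanding this for every nontrivial $\mathbf{r}$ is equivalent to demanding that, for every such $\mathbf{r}$, the exponents of $f$ meet both open halfspaces $\{\mathbf{v}\cdot\mathbf{r} > 0\}$ and $\{\mathbf{v}\cdot\mathbf{r} < 0\}$.

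Finally I would translate to the geometric picture. Since $\sum r_i = 0$, the functional $\mathbf{v} \mapsto \mathbf{v}\cdot\mathbf{r}$ agrees on the affine hyperplane $\{v_0 + v_1 + v_2 + v_3 = 4\}$ with $\mathbf{v} \mapsto (\mathbf{v} - \mathbf{p})\cdot\mathbf{r}$, so the zero set $H_{\mathbf{r}} = \{\mathbf{v} \st (\mathbf{v} - \mathbf{p})\cdot\mathbf{r} = 0\}$ is a plane through $\mathbf{p} = (1,1,1,1)$ in the affine span of $4\Delta_3$; conversely, every plane through $\mathbf{p}$ in this affine $3$-space arises from some nonzero $\mathbf{r}$ with $\sum r_i = 0$. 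The open-halfspace condition thus becomes precisely the one in the theorem. The main obstacle, beyond invoking the Hilbert--Mumford criterion itself, is the bookkeeping for the reduction to diagonal 1-PSGs via conjugation; once that is in place, the combinatorial reformulation in terms of halfspaces of $\text{Newt}(f)$ is immediate.
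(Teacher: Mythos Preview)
Your argument is correct and is the standard derivation of this statement from the Hilbert--Mumford numerical criterion. Note, however, that the paper does not give its own proof of this theorem: it is quoted as a result of Mumford (\cite[\S 1.14]{Mum77}) and used as a black box, so there is no ``paper's proof'' to compare against. Your sketch is precisely how one would justify the statement, and it matches the reference the paper cites. One small point you leave implicit: the diagonal one-parameter subgroups are indexed by \emph{integer} vectors $\mathbf{r}$, whereas the theorem quantifies over \emph{all} planes through $\mathbf{p}$; the equivalence holds because the monomial support is a finite set of lattice points, so a real separating hyperplane through $\mathbf{p}$ can always be perturbed to a rational one.
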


In other words, $f$ is stable if, for every choice of coordinates and all planes $H$, the Newton polytope Newt$(f)$ does not entirely lie in one of the two closed halfspaces defined by $H$. This situation is depicted
for plane cubics  in Figure~\ref{fig:mumford}. As a consequence we have the following corollary.

\vspace{-0.2in}
\begin{figure}[h]
\includegraphics[width=6cm]{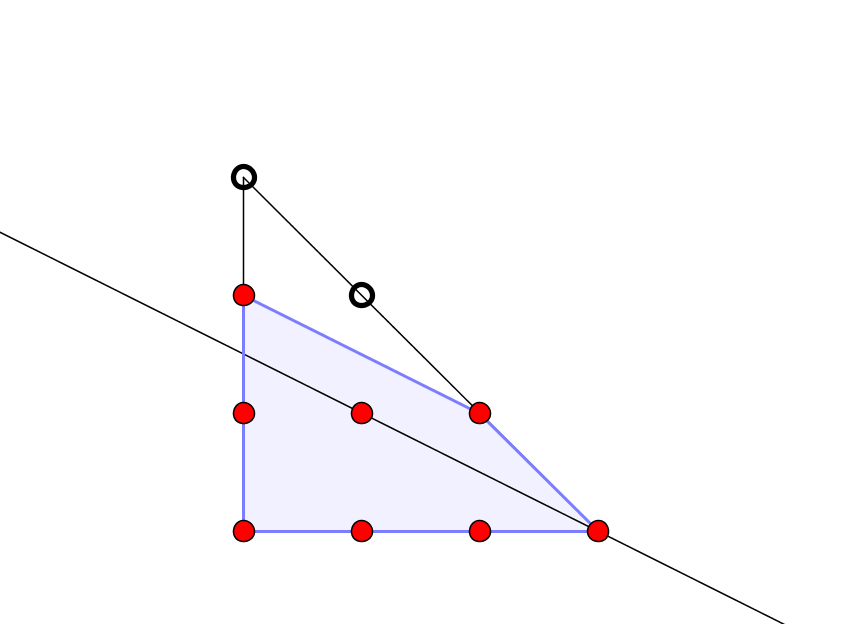} \qquad \qquad  \includegraphics[width=6cm]{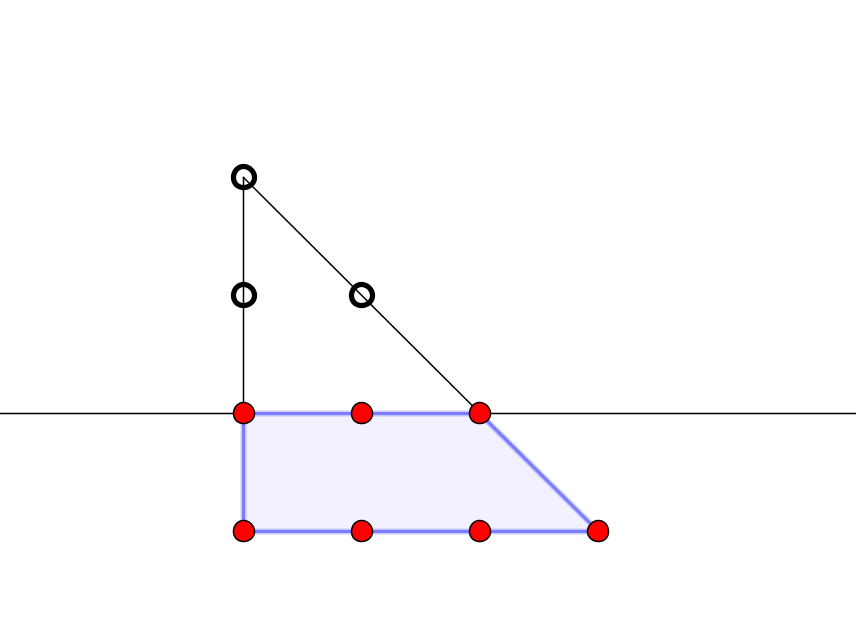} 
\caption{\label{fig:mumford} The Newton polytopes of a stable and of a semistable cubic plane curve.} 
\end{figure}

\begin{corollary}
\label{cor:stability}
Let $f,g \in \HS_{4,3}$ such that $\Newt(f) \subseteq \Newt(g)$. 
If $f$ is stable and $g$ has general coefficients then $g$ is stable.
\end{corollary}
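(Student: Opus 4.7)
The plan is an openness-plus-density argument, whose only nontrivial input is that the stable locus $\HS_{4,3}^s$ is Zariski open in $\HS_{4,3}$. This is standard in GIT, and in the present setting it can be read off Theorem~\ref{thm:mumford}: the Hilbert--Mumford criterion exhibits the unstable locus as the union, over $A \in \text{SL}(4)$ and planes $H$ through $\mathbf{p}$, of the closed conditions that some open halfspace of $H$ contains no monomial of $A\cdot f$; after the usual reduction to finitely many 1-parameter subgroups up to conjugation, this union is Zariski closed.

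Given openness, I would first fix $P = \Newt(g)$ and work inside the affine linear subspace
\[
V_P \, := \, \{\, h \in \HS_{4,3} \st \Newt(h) \subseteq P \,\}
\]
of quartics whose monomial support sits inside $P$. The hypothesis $\Newt(f)\subseteq \Newt(g) = P$ places $f$ in $V_P$, and trivially $g \in V_P$ as well. Then $V_P \cap \HS_{4,3}^s$ is Zariski open in $V_P$ and contains $f$, so by irreducibility of the affine space $V_P$ it is Zariski dense. Since ``$g$ has general coefficients'' means $g$ lies in a non-empty Zariski open subset of $V_P$, and two non-empty opens in an irreducible variety must meet, it follows that $g \in \HS_{4,3}^s$.

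The proof has no real obstacle. The conceptual point worth stressing is that one does \emph{not} need to chase how the Newton polytope transforms under each $A \in \text{SL}(4)$: once openness of $\HS_{4,3}^s$ is granted, the inclusion $\Newt(f)\subseteq\Newt(g)$ enters only through the membership $f \in V_P$, and the density step takes care of the rest.
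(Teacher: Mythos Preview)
Your argument is correct. The paper gives no proof of this corollary beyond declaring it ``a consequence'' of Theorem~\ref{thm:mumford}, and your openness-plus-density argument in the linear space $V_P$ is precisely how one makes that rigorous. One minor caveat: your parenthetical derivation of the openness of $\HS_{4,3}^s$ via ``the usual reduction to finitely many $1$-parameter subgroups up to conjugation'' is not literally true for $\mathrm{SL}(4)$ (there are infinitely many such classes), but openness of the stable locus is a foundational GIT fact \cite{Mum94} that you may simply invoke, so the argument stands.
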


This allows us to restrict our interest to polytopes that satisfy the following minimality condition. 
A reflexive lattice polytope $P$ contained in $4\Delta_3$ is 
called \emph{minimal} if it does not properly contain any reflexive polytopes.
We note that this notion  cannot be extended naturally to canonical polytopes, as there are reflexive polytopes which are minimal, but properly contain canonical polytopes. In this sense, the above definition differs from the
notion of minimality used by Kasprzyk in \cite{Kas10}.

\section{Minimal polytopes}

In this section we classify minimal polytopes and we examine their combinatorial properties.
The stability of their quartic surfaces will be established in the next section. From the list of reflexive polytopes in
Corollary~\ref{cor:reflexive}, we can extract all those that are minimal up to inclusion.

\begin{proposition}
\label{prop:minimal}
Up to the $S_4$-action, there are precisely $115$ minimal reflexive polytopes
in $4 \Delta_3$. Among these, $29$ admit two regular unimodular central triangulations,
and $86$ admit just one.  
\end{proposition}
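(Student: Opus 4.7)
The plan is to extract the minimal polytopes from the list of $15\,139$ reflexive polytopes
produced in Corollary~\ref{cor:reflexive}, and then rerun the triangulation pipeline of
Section~\ref{ssec:regular} on just those. Since every minimal reflexive polytope is in particular reflexive,
and since inclusion of polytopes is compatible with the $S_4$-action (because $S_4$ fixes both $\mathbf{p}$
and $4\Delta_3$), the problem reduces to computing the partial order by containment on the already
classified orbits.

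Concretely, I would first fix representatives $P_1, \ldots, P_{15139}$, one per $S_4$-orbit.
For every ordered pair $(P_i, P_j)$ with $i \neq j$, I would check whether some $\sigma \in S_4$
satisfies $\sigma \cdot P_j \subsetneq P_i$. This is a finite combinatorial test: for each of the
$24$ permutations, compare the permuted vertex set $\sigma(\mathrm{vert}(P_j))$ against
$P_i \cap \ZZ^3$, rejecting equality with $P_i$. A polytope $P_i$ is declared \emph{minimal} precisely
when no such $(P_j, \sigma)$ exists. Several cheap monotone invariants (normalized volume, number of
lattice points, number of vertices, number of facets, $h^*$-vector) prune the vast majority of pairs
immediately, since any of these in $\sigma \cdot P_j$ must be strictly bounded by the corresponding
quantity in $P_i$.

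Having isolated the $115$ minimal polytopes, I would feed each one into the enumeration of regular
unimodular central triangulations already described in Section~\ref{ssec:regular}: list all unimodular
triangulations of each boundary facet, glue them into central triangulations of $P$, and filter for
regularity with \topcom. Because minimal reflexive polytopes tend to be small (additional lattice
points typically create room for a smaller reflexive subpolytope), this step is far cheaper than the
full enumeration that yields Corollary~\ref{cor:partial}, and one expects to recover the advertised
split $115 = 29 + 86$ directly from the output.

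The main obstacle is the pairwise inclusion check: naive comparison across the $15\,139$ orbits gives
on the order of $10^8$ candidate pairs, each requiring up to $24$ symmetry translates, so without
aggressive pruning the search risks being slow. A secondary bookkeeping issue is that minimality
is a property of $S_4$-orbits rather than of individual polytopes, so one must be careful to identify
symmetric copies consistently when detecting inclusions and when counting triangulations up to the
$S_4$-symmetry already built into Corollary~\ref{cor:reflexive}.
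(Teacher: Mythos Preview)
Your proposal is correct and matches the paper's own approach: the paper simply states that ``from the list of reflexive polytopes in Corollary~\ref{cor:reflexive}, we can extract all those that are minimal up to inclusion,'' and the triangulation counts come from rerunning the pipeline of Section~\ref{ssec:regular} on these small polytopes. You have just spelled out the implementation details (pairwise containment tests under the $24$ permutations, invariant-based pruning) that the paper leaves implicit.
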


We next describe the combinatorics of all K3 polytopes that arise
from the triangulations in Proposition~\ref{prop:minimal}.
Each minimal polytope contributes one or two K3 polytopes to the following census.
We obtain six combinatorial types of  K3 polytopes, each displayed
by a vertex-facet incidence list.

\begin{itemize}
\item {\em K3 polytopes from minimal polytopes with $5$ lattice points}:
Each of the four tetrahedra in the triangulation shares a facet with the others. The K3 polytope is a tetrahedron:
\[ \{0 \, 1 \,  2\} \ \{1 \, 2 \, 3\} \ \{0 \, 1 \, 3\} \ \{0 \, 2 \, 3\}.\] 
\item  {\em K3 polytopes from minimal polytopes with $6$ lattice points}: 
The minimal polytope is a bipyramid.
The K3 polytope is a triangular prism. It  has the f-vector $(6,9,5)$:
\[ \{2 \, 3 \,  4 \, 5\} \ \{0 \, 1 \, 4 \, 5\} \ \{0 \, 1 \, 2 \, 3\} \ \{1 \, 3 \, 5\} \ \{0 \, 2 \, 4\}\] 
\item 
{\em K3 polytopes from minimal polytopes with $7$ lattice points}: The minimal 
polytope is an octahedron. The K3 polytope has the f-vector $(8,12,6)$. Combinatorially, it is a cube:
\[ \{4 \, 5 \, 6 \,  7 \} \ \{2 \, 3 \, 6 \, 7 \} \ \{1 \, 3 \, 5 \, 7 \} \ \{0 \, 2 \, 4 \, 6 \} \ \{0 \, 1 \, 4 \, 5 \} \ \{0 \, 1 \, 2 \, 3 \} \]
\item {\em K3 polytopes from minimal polytopes with $8$ lattice points}:
These K3 polytopes are pentagonal prisms, so they have the f-vector $(10, 15,7)$:
\[ \{0 \, 1 \, 2 \, 3 \, 4 \} \ \{0 \, 4 \, 5 \, 9\} \ \{3 \, 4 \, 8 \, 9\} \ \{2 \, 3 \, 7 \, 8\} \ \{5 \, 6 \, 7 \, 8 \, 9\} \ \{0 \, 1 \, 5 \, 6\} \ \{1 \, 2 \, 6 \, 7\} \]
\item {\em K3 polytopes from minimal polytopes with $9$ lattice points}:
These have f-vector $(12, 18, 8)$, and they come in two combinatorial types.  
The first is a hexagonal prism:
\[\{1 \, 2\, 4\, 7\, 8\, 10\} \ \{0\, 1\, 6\, 7\} \ \{0\, 1\, 2\, 3\} \ \{4\, 5\, 10\, 11\} \ \{0\, 3 \, 5 \, 6 \, 9 \,11\} \ \{8\, 9\, 10\, 11\} \ \{2\, 3\, 4\, 5\} \ \{6\, 7\, 8\, 9\}. \]
The second (lower right in Figure~\ref{fig:K3minimal})
has two triangles and two heptagons among its facets:
\[\{4 \, 5\, 6\, 7\, 8\, 9\, 11\} \ \{2\, 3\, 4\, 8\} \ \{0\, 1\, 3\, 4\} \ \{8\, 9\, 10\, 11\} \ \{1\, 2 \, 3 \, 5 \, 7 \, 8 \,11\} \ \{2\, 10\, 11\} \ \{0\, 1\, 5\} \ \{6\, 7\, 8\, 9\}. \]
\end{itemize}
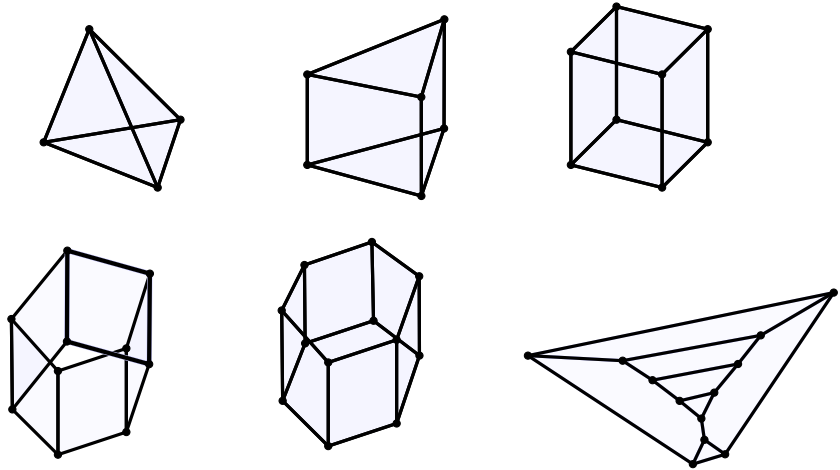
\begin{figure}[H]
\begin{subfigure}[b]{0.2\textwidth}
\tikzset{every picture/.style={scale=0.3}}%
\definecolor{ccccff}{rgb}{0.8,0.8,1.}
\begin{tikzpicture}[line cap=round,line join=round,>=triangle 45,x=1.0cm,y=1.0cm]
\clip(-2.161700015990635,-0.8199126924131027) rectangle (19.019202164895066,9.266830855083576);
\fill[line width=2.pt,color=ccccff,fill=ccccff,fill opacity=0.10000000149011612] (0.,3.) -- (2.,8.) -- (6.,4.) -- cycle;
\fill[line width=2.pt,color=ccccff,fill=ccccff,fill opacity=0.10000000149011612] (2.,8.) -- (6.,4.) -- (5.,1.) -- cycle;
\fill[line width=2.pt,color=ccccff,fill=ccccff,fill opacity=0.10000000149011612] (2.,8.) -- (0.,3.) -- (5.,1.) -- cycle;
\fill[line width=2.pt,color=ccccff,fill=ccccff,fill opacity=0.10000000149011612] (0.,3.) -- (6.,4.) -- (5.,1.) -- cycle;
\draw [line width=1.2pt] (0.,3.)-- (2.,8.);
\draw [line width=1.2pt] (2.,8.)-- (6.,4.);
\draw [line width=1.2pt] (6.,4.)-- (0.,3.);
\draw [line width=1.2pt] (2.,8.)-- (6.,4.);
\draw [line width=1.2pt] (6.,4.)-- (5.,1.);
\draw [line width=1.2pt] (5.,1.)-- (2.,8.);
\draw [line width=1.2pt] (2.,8.)-- (0.,3.);
\draw [line width=1.2pt] (0.,3.)-- (5.,1.);
\draw [line width=1.2pt] (5.,1.)-- (2.,8.);
\draw [line width=1.2pt] (0.,3.)-- (6.,4.);
\draw [line width=1.2pt] (6.,4.)-- (5.,1.);
\draw [line width=1.2pt] (5.,1.)-- (0.,3.);
\begin{scriptsize}
\draw [fill=black] (0.,3.) circle (4.5pt);
\draw [fill=black] (2.,8.) circle (4.5pt);
\draw [fill=black] (6.,4.) circle (4.5pt);
\draw [fill=black] (5.,1.) circle (4.5pt);
\end{scriptsize}
\end{tikzpicture}
\end{subfigure} \quad
\begin{subfigure}[b]{0.2\textwidth}
\tikzset{every picture/.style={scale=0.3}}%
\definecolor{ccccff}{rgb}{0.8,0.8,1.}
\begin{tikzpicture}[line cap=round,line join=round,>=triangle 45,x=1.0cm,y=1.0cm]
\clip(-2.1617000159906348,-0.8199126924131042) rectangle (19.019202164895066,9.266830855083574);
\fill[line width=2.pt,color=ccccff,fill=ccccff,fill opacity=0.10000000149011612] (0.,2.0008495480853425) -- (5.998362179737032,3.6127136855130266) -- (5.003539782418382,0.6282464935570807) -- cycle;
\fill[line width=2.pt,color=ccccff,fill=ccccff,fill opacity=0.10000000149011612] (0.,6.) -- (6.0109548683106855,8.435713409222425) -- (5.,5.) -- cycle;
\fill[line width=2.pt,color=ccccff,fill=ccccff,fill opacity=0.10000000149011612] (0.,6.) -- (5.,5.) -- (5.003539782418382,0.6282464935570807) -- (0.,2.0008495480853425) -- cycle;
\fill[line width=2.pt,color=ccccff,fill=ccccff,fill opacity=0.10000000149011612] (6.0109548683106855,8.435713409222425) -- (5.998362179737032,3.6127136855130266) -- (5.003539782418382,0.6282464935570807) -- (5.,5.) -- cycle;
\fill[line width=2.pt,color=ccccff,fill=ccccff,fill opacity=0.10000000149011612] (6.0109548683106855,8.435713409222425) -- (5.998362179737032,3.6127136855130266) -- (0.,2.0008495480853425) -- (0.,6.) -- cycle;
\draw [line width=1.2pt] (0.,2.0008495480853425)-- (5.998362179737032,3.6127136855130266);
\draw [line width=1.2pt] (5.998362179737032,3.6127136855130266)-- (5.003539782418382,0.6282464935570807);
\draw [line width=1.2pt] (5.003539782418382,0.6282464935570807)-- (0.,2.0008495480853425);
\draw [line width=1.2pt] (0.,6.)-- (6.0109548683106855,8.435713409222425);
\draw [line width=1.2pt] (6.0109548683106855,8.435713409222425)-- (5.,5.);
\draw [line width=1.2pt] (5.,5.)-- (0.,6.);
\draw [line width=1.2pt] (0.,6.)-- (5.,5.);
\draw [line width=1.2pt] (5.,5.)-- (5.003539782418382,0.6282464935570807);
\draw [line width=1.2pt] (5.003539782418382,0.6282464935570807)-- (0.,2.0008495480853425);
\draw [line width=1.2pt] (0.,2.0008495480853425)-- (0.,6.);
\draw [line width=1.2pt] (6.0109548683106855,8.435713409222425)-- (5.998362179737032,3.6127136855130266);
\draw [line width=1.2pt] (5.998362179737032,3.6127136855130266)-- (5.003539782418382,0.6282464935570807);
\draw [line width=1.2pt] (5.003539782418382,0.6282464935570807)-- (5.,5.);
\draw [line width=1.2pt] (5.,5.)-- (6.0109548683106855,8.435713409222425);
\draw [line width=1.2pt] (6.0109548683106855,8.435713409222425)-- (5.998362179737032,3.6127136855130266);
\draw [line width=1.2pt] (5.998362179737032,3.6127136855130266)-- (0.,2.0008495480853425);
\draw [line width=1.2pt] (0.,2.0008495480853425)-- (0.,6.);
\draw [line width=1.2pt] (0.,6.)-- (6.0109548683106855,8.435713409222425);
\begin{scriptsize}
\draw [fill=black] (0.,2.0008495480853425) circle (4.5pt);
\draw [fill=black] (5.998362179737032,3.6127136855130266) circle (4.5pt);
\draw [fill=black] (5.003539782418382,0.6282464935570807) circle (4.5pt);
\draw [fill=black] (0.,6.) circle (4.5pt);
\draw [fill=black] (6.0109548683106855,8.435713409222425) circle (4.5pt);
\draw [fill=black] (5.,5.) circle (4.5pt);
\end{scriptsize}
\end{tikzpicture}
\end{subfigure} \quad
\begin{subfigure}[b]{0.2\textwidth}
\tikzset{every picture/.style={scale=0.3}}%
\definecolor{ccccff}{rgb}{0.8,0.8,1.}
\begin{tikzpicture}[line cap=round,line join=round,>=triangle 45,x=1.0cm,y=1.0cm]
\clip(-2.1617000159906343,-0.819912692413104) rectangle (19.019202164895066,9.266830855083574);
\fill[line width=2.pt,color=ccccff,fill=ccccff,fill opacity=0.10000000149011612] (0.,7.) -- (2.,9.) -- (6.,8.) -- (4.,6.) -- cycle;
\fill[line width=2.pt,color=ccccff,fill=ccccff,fill opacity=0.10000000149011612] (0.,7.) -- (2.,9.) -- (2.,4.) -- (0.,2.0008495480853425) -- cycle;
\fill[line width=2.pt,color=ccccff,fill=ccccff,fill opacity=0.10000000149011612] (2.,9.) -- (6.,8.) -- (6.,3.) -- (2.,4.) -- cycle;
\fill[line width=2.pt,color=ccccff,fill=ccccff,fill opacity=0.10000000149011612] (6.,8.) -- (4.,6.) -- (4.,1.) -- (6.,3.) -- cycle;
\fill[line width=2.pt,color=ccccff,fill=ccccff,fill opacity=0.10000000149011612] (0.,7.) -- (4.,6.) -- (4.,1.) -- (0.,2.0008495480853425) -- cycle;
\fill[line width=2.pt,color=ccccff,fill=ccccff,fill opacity=0.10000000149011612] (2.,4.) -- (6.,3.) -- (4.,1.) -- (0.,2.0008495480853425) -- cycle;
\draw [line width=1.2pt] (0.,7.)-- (2.,9.);
\draw [line width=1.2pt] (2.,9.)-- (6.,8.);
\draw [line width=1.2pt] (6.,8.)-- (4.,6.);
\draw [line width=1.2pt] (4.,6.)-- (0.,7.);
\draw [line width=1.2pt] (0.,7.)-- (2.,9.);
\draw [line width=1.2pt] (2.,9.)-- (2.,4.);
\draw [line width=1.2pt] (2.,4.)-- (0.,2.0008495480853425);
\draw [line width=1.2pt] (0.,2.0008495480853425)-- (0.,7.);
\draw [line width=1.2pt] (2.,9.)-- (6.,8.);
\draw [line width=1.2pt] (6.,8.)-- (6.,3.);
\draw [line width=1.2pt] (6.,3.)-- (2.,4.);
\draw [line width=1.2pt] (2.,4.)-- (2.,9.);
\draw [line width=1.2pt] (6.,8.)-- (4.,6.);
\draw [line width=1.2pt] (4.,6.)-- (4.,1.);
\draw [line width=1.2pt] (4.,1.)-- (6.,3.);
\draw [line width=1.2pt] (6.,3.)-- (6.,8.);
\draw [line width=1.2pt] (0.,7.)-- (4.,6.);
\draw [line width=1.2pt] (4.,6.)-- (4.,1.);
\draw [line width=1.2pt] (4.,1.)-- (0.,2.0008495480853425);
\draw [line width=1.2pt] (0.,2.0008495480853425)-- (0.,7.);
\draw [line width=1.2pt] (2.,4.)-- (6.,3.);
\draw [line width=1.2pt] (6.,3.)-- (4.,1.);
\draw [line width=1.2pt] (4.,1.)-- (0.,2.0008495480853425);
\draw [line width=1.2pt] (0.,2.0008495480853425)-- (2.,4.);
\begin{scriptsize}
\draw [fill=black] (0.,2.0008495480853425) circle (4.5pt);
\draw [fill=black] (6.,3.) circle (4.5pt);
\draw [fill=black] (4.,1.) circle (4.5pt);
\draw [fill=black] (2.,4.) circle (4.5pt);
\draw [fill=black] (0.,7.) circle (4.5pt);
\draw [fill=black] (2.,9.) circle (4.5pt);
\draw [fill=black] (6.,8.) circle (4.5pt);
\draw [fill=black] (4.,6.) circle (4.5pt);
\end{scriptsize}
\end{tikzpicture}
\end{subfigure}

\vspace{-0.43in}

\begin{subfigure}[b]{0.2\textwidth}
\tikzset{every picture/.style={scale=0.3}}%
\definecolor{ccccff}{rgb}{0.8,0.8,1.}
\begin{tikzpicture}[line cap=round,line join=round,>=triangle 45,x=1.0cm,y=1.0cm]
\clip(-0.3973712171493665,-2.08163848033795) rectangle (28.591996026839478,11.723642829076395);
\fill[line width=2.pt,color=ccccff,fill=ccccff,fill opacity=0.10000000149011612] (-0.041091260187331294,5.9952503636483145) -- (0.,2.0008495480853425) -- (2.3947081487996185,5.00197028142147) -- (2.411943206614118,9.017738752199925) -- cycle;
\fill[line width=2.pt,color=ccccff,fill=ccccff,fill opacity=0.10000000149011612] (6.0313053476590985,8.000870341144436) -- (5.,4.69658646900089) -- (5.,1.) -- (6.,4.) -- cycle;
\fill[line width=2.pt,color=ccccff,fill=ccccff,fill opacity=0.10000000149011612] (2.,3.6965864690008896) -- (2.,0.) -- (5.,1.) -- (5.,4.69658646900089) -- cycle;
\fill[line width=2.pt,color=ccccff,fill=ccccff,fill opacity=0.10000000149011612] (-0.041091260187331294,5.9952503636483145) -- (0.,2.0008495480853425) -- (2.,0.) -- (2.,3.6965864690008896) -- cycle;
\fill[line width=2.pt,color=ccccff,fill=ccccff,fill opacity=0.10000000149011612] (2.411943206614118,9.017738752199925) -- (6.0313053476590985,8.000870341144436) -- (6.,4.) -- (2.3947081487996185,5.00197028142147) -- cycle;
\draw [line width=1.2pt] (-0.041091260187331294,5.9952503636483145)-- (0.,2.0008495480853425);
\draw [line width=1.2pt] (0.,2.0008495480853425)-- (2.3947081487996185,5.00197028142147);
\draw [line width=1.2pt] (2.3947081487996185,5.00197028142147)-- (2.411943206614118,9.017738752199925);
\draw [line width=1.2pt] (2.411943206614118,9.017738752199925)-- (-0.041091260187331294,5.9952503636483145);
\draw [line width=1.2pt] (6.0313053476590985,8.000870341144436)-- (5.,4.69658646900089);
\draw [line width=1.2pt] (5.,4.69658646900089)-- (5.,1.);
\draw [line width=1.2pt] (5.,1.)-- (6.,4.);
\draw [line width=1.2pt] (6.,4.)-- (6.0313053476590985,8.000870341144436);
\draw [line width=1.2pt] (2.,3.6965864690008896)-- (2.,0.);
\draw [line width=1.2pt] (2.,0.)-- (5.,1.);
\draw [line width=1.2pt] (5.,1.)-- (5.,4.69658646900089);
\draw [line width=1.2pt] (5.,4.69658646900089)-- (2.,3.6965864690008896);
\draw [line width=1.2pt] (-0.041091260187331294,5.9952503636483145)-- (0.,2.0008495480853425);
\draw [line width=1.2pt] (0.,2.0008495480853425)-- (2.,0.);
\draw [line width=1.2pt] (2.,0.)-- (2.,3.6965864690008896);
\draw [line width=1.2pt] (2.,3.6965864690008896)-- (-0.041091260187331294,5.9952503636483145);
\draw [line width=1.6pt] (2.3947081487996185,5.00197028142147)-- (6.,4.);
\draw [line width=1.6pt] (2.411943206614118,9.017738752199925)-- (6.0313053476590985,8.000870341144436);
\draw [line width=2.pt,color=ccccff] (2.411943206614118,9.017738752199925)-- (6.0313053476590985,8.000870341144436);
\draw [line width=2.pt,color=ccccff] (6.0313053476590985,8.000870341144436)-- (6.,4.);
\draw [line width=2.pt,color=ccccff] (6.,4.)-- (2.3947081487996185,5.00197028142147);
\draw [line width=2.pt,color=ccccff] (2.3947081487996185,5.00197028142147)-- (2.411943206614118,9.017738752199925);
\draw [line width=1.6pt] (2.411943206614118,9.017738752199925)-- (6.0313053476590985,8.000870341144436);
\draw [line width=1.6pt] (2.411943206614118,9.017738752199925)-- (2.3947081487996185,5.00197028142147);
\draw [line width=1.6pt] (2.3947081487996185,5.00197028142147)-- (6.,4.);
\draw [line width=1.6pt] (6.,4.)-- (6.0313053476590985,8.000870341144436);
\begin{scriptsize}
\draw [fill=black] (0.,2.0008495480853425) circle (4.5pt);
\draw [fill=black] (6.,4.) circle (4.5pt);
\draw [fill=black] (2.,0.) circle (4.5pt);
\draw [fill=black] (5.,1.) circle (4.5pt);
\draw [fill=black] (2.3947081487996185,5.00197028142147) circle (4.5pt);
\draw [fill=black] (-0.041091260187331294,5.9952503636483145) circle (4.5pt);
\draw [fill=black] (2.411943206614118,9.017738752199925) circle (4.5pt);
\draw [fill=black] (2.,3.6965864690008896) circle (4.5pt);
\draw [fill=black] (5.,4.69658646900089) circle (4.5pt);
\draw [fill=black] (6.0313053476590985,8.000870341144436) circle (4.5pt);
\end{scriptsize}
\end{tikzpicture}
\end{subfigure} \quad
\begin{subfigure}[b]{0.2\textwidth}
\tikzset{every picture/.style={scale=0.3}}%
\definecolor{ccccff}{rgb}{0.8,0.8,1.}
\begin{tikzpicture}[line cap=round,line join=round,>=triangle 45,x=1.0cm,y=1.0cm]
\clip(-0.6903671999958648,-2.460809752256946) rectangle (28.678171315911975,11.723642829076395);
\fill[line width=2.pt,color=ccccff,fill=ccccff,fill opacity=0.10000000149011612] (-0.041091260187331294,5.9952503636483145) -- (0.9562496748460502,8.00378419114609) -- (3.9151546828520476,9.021915422787718) -- (5.989075352833551,7.50635801010893) -- (5.,4.69658646900089) -- (2.,3.6965864690008896) -- cycle;
\fill[line width=2.pt,color=ccccff,fill=ccccff,fill opacity=0.10000000149011612] (-0.041091260187331294,5.9952503636483145) -- (0.,2.0008495480853425) -- (0.9861480425689808,4.540226339809746) -- (0.9562496748460502,8.00378419114609) -- cycle;
\fill[line width=2.pt,color=ccccff,fill=ccccff,fill opacity=0.10000000149011612] (0.9562496748460502,8.00378419114609) -- (3.9151546828520476,9.021915422787718) -- (3.9861480425689813,5.540226339809746) -- (0.9861480425689808,4.540226339809746) -- cycle;
\fill[line width=2.pt,color=ccccff,fill=ccccff,fill opacity=0.10000000149011612] (3.9151546828520476,9.021915422787718) -- (5.989075352833551,7.50635801010893) -- (6.,4.) -- (3.9861480425689813,5.540226339809746) -- cycle;
\fill[line width=2.pt,color=ccccff,fill=ccccff,fill opacity=0.10000000149011612] (5.989075352833551,7.50635801010893) -- (5.,4.69658646900089) -- (5.,1.) -- (6.,4.) -- cycle;
\fill[line width=2.pt,color=ccccff,fill=ccccff,fill opacity=0.10000000149011612] (2.,3.6965864690008896) -- (2.,0.) -- (5.,1.) -- (5.,4.69658646900089) -- cycle;
\fill[line width=2.pt,color=ccccff,fill=ccccff,fill opacity=0.10000000149011612] (-0.041091260187331294,5.9952503636483145) -- (0.,2.0008495480853425) -- (2.,0.) -- (2.,3.6965864690008896) -- cycle;
\fill[line width=2.pt,color=ccccff,fill=ccccff,fill opacity=0.10000000149011612] (0.,2.0008495480853425) -- (0.9861480425689808,4.540226339809746) -- (3.9861480425689813,5.540226339809746) -- (6.,4.) -- (5.,1.) -- (2.,0.) -- cycle;
\draw [line width=1.2pt] (-0.041091260187331294,5.9952503636483145)-- (0.9562496748460502,8.00378419114609);
\draw [line width=1.2pt] (0.9562496748460502,8.00378419114609)-- (3.9151546828520476,9.021915422787718);
\draw [line width=1.2pt] (3.9151546828520476,9.021915422787718)-- (5.989075352833551,7.50635801010893);
\draw [line width=1.2pt] (5.989075352833551,7.50635801010893)-- (5.,4.69658646900089);
\draw [line width=1.2pt] (5.,4.69658646900089)-- (2.,3.6965864690008896);
\draw [line width=1.2pt] (2.,3.6965864690008896)-- (-0.041091260187331294,5.9952503636483145);
\draw [line width=1.2pt] (-0.041091260187331294,5.9952503636483145)-- (0.,2.0008495480853425);
\draw [line width=1.2pt] (0.,2.0008495480853425)-- (0.9861480425689808,4.540226339809746);
\draw [line width=1.2pt] (0.9861480425689808,4.540226339809746)-- (0.9562496748460502,8.00378419114609);
\draw [line width=1.2pt] (0.9562496748460502,8.00378419114609)-- (-0.041091260187331294,5.9952503636483145);
\draw [line width=1.2pt] (0.9562496748460502,8.00378419114609)-- (3.9151546828520476,9.021915422787718);
\draw [line width=1.2pt] (3.9151546828520476,9.021915422787718)-- (3.9861480425689813,5.540226339809746);
\draw [line width=1.2pt] (3.9861480425689813,5.540226339809746)-- (0.9861480425689808,4.540226339809746);
\draw [line width=1.2pt] (0.9861480425689808,4.540226339809746)-- (0.9562496748460502,8.00378419114609);
\draw [line width=1.2pt] (3.9151546828520476,9.021915422787718)-- (5.989075352833551,7.50635801010893);
\draw [line width=1.2pt] (5.989075352833551,7.50635801010893)-- (6.,4.);
\draw [line width=1.2pt] (6.,4.)-- (3.9861480425689813,5.540226339809746);
\draw [line width=1.2pt] (3.9861480425689813,5.540226339809746)-- (3.9151546828520476,9.021915422787718);
\draw [line width=1.2pt] (5.989075352833551,7.50635801010893)-- (5.,4.69658646900089);
\draw [line width=1.2pt] (5.,4.69658646900089)-- (5.,1.);
\draw [line width=1.2pt] (5.,1.)-- (6.,4.);
\draw [line width=1.2pt] (6.,4.)-- (5.989075352833551,7.50635801010893);
\draw [line width=1.2pt] (2.,3.6965864690008896)-- (2.,0.);
\draw [line width=1.2pt] (2.,0.)-- (5.,1.);
\draw [line width=1.2pt] (5.,1.)-- (5.,4.69658646900089);
\draw [line width=1.2pt] (5.,4.69658646900089)-- (2.,3.6965864690008896);
\draw [line width=1.2pt] (-0.041091260187331294,5.9952503636483145)-- (0.,2.0008495480853425);
\draw [line width=1.2pt] (0.,2.0008495480853425)-- (2.,0.);
\draw [line width=1.2pt] (2.,0.)-- (2.,3.6965864690008896);
\draw [line width=1.2pt] (2.,3.6965864690008896)-- (-0.041091260187331294,5.9952503636483145);
\draw [line width=1.2pt] (0.,2.0008495480853425)-- (0.9861480425689808,4.540226339809746);
\draw [line width=1.2pt] (0.9861480425689808,4.540226339809746)-- (3.9861480425689813,5.540226339809746);
\draw [line width=1.2pt] (3.9861480425689813,5.540226339809746)-- (6.,4.);
\draw [line width=1.2pt] (6.,4.)-- (5.,1.);
\draw [line width=1.2pt] (5.,1.)-- (2.,0.);
\draw [line width=1.2pt] (2.,0.)-- (0.,2.0008495480853425);
\begin{scriptsize}
\draw [fill=black] (0.,2.0008495480853425) circle (4.5pt);
\draw [fill=black] (6.,4.) circle (4.5pt);
\draw [fill=black] (2.,0.) circle (4.5pt);
\draw [fill=black] (5.,1.) circle (4.5pt);
\draw [fill=black] (0.9861480425689808,4.540226339809746) circle (4.5pt);
\draw [fill=black] (3.9861480425689813,5.540226339809746) circle (4.5pt);
\draw [fill=black] (-0.041091260187331294,5.9952503636483145) circle (4.5pt);
\draw [fill=black] (0.9562496748460502,8.00378419114609) circle (4.5pt);
\draw [fill=black] (3.9151546828520476,9.021915422787718) circle (4.5pt);
\draw [fill=black] (2.,3.6965864690008896) circle (4.5pt);
\draw [fill=black] (5.,4.69658646900089) circle (4.5pt);
\draw [fill=black] (5.989075352833551,7.50635801010893) circle (4.5pt);
\end{scriptsize}
\end{tikzpicture}
\end{subfigure} 
\begin{subfigure}[b]{0.2\textwidth}
\tikzset{every picture/.style={scale=0.3}}%
\definecolor{ccccff}{rgb}{0.8,0.8,1.}
\begin{tikzpicture}[line cap=round,line join=round,>=triangle 45,x=1.0cm,y=1.0cm]
\clip(-0.8701430064260149,-1.6639407310128818) rectangle (24.171261036089607,10.231470582881203);
\fill[line width=2.pt,color=ccccff,fill=ccccff,fill opacity=0.10000000149011612] (0.1886207485436303,4.795149587647471) -- (4.342269681838108,4.571093836058975) -- (5.652134075740101,3.7093409453339845) -- (6.841353064940594,2.795882881165495) -- (7.7945996527559664,2.013364143244739) -- (7.928590518619606,1.0754280821992728) -- (7.422402803134747,0.) -- cycle;
\fill[line width=0.pt,color=ccccff,fill=ccccff,fill opacity=0.10000000149011612] (7.422402803134747,0.) -- (8.83675083169538,0.4352495008507801) -- (7.928590518619606,1.0754280821992728) -- cycle;
\fill[line width=0.pt,color=ccccff,fill=ccccff,fill opacity=0.10000000149011612] (6.841353064940594,2.795882881165495) -- (8.36033886418022,3.159730440078086) -- (7.7945996527559664,2.013364143244739) -- cycle;
\fill[line width=0.pt,color=ccccff,fill=ccccff,fill opacity=0.10000000149011612] (5.652134075740101,3.7093409453339845) -- (9.402490043119633,4.425199728790223) -- (8.36033886418022,3.159730440078086) -- (6.841353064940594,2.795882881165495) -- cycle;
\fill[line width=0.pt,color=ccccff,fill=ccccff,fill opacity=0.10000000149011612] (4.342269681838108,4.571093836058975) -- (10.399977600104501,5.690669017502359) -- (9.402490043119633,4.425199728790223) -- (5.652134075740101,3.7093409453339845) -- cycle;
\fill[line width=0.pt,color=ccccff,fill=ccccff,fill opacity=0.10000000149011612] (0.1886207485436303,4.795149587647471) -- (13.600870506846984,7.5814290135781395) -- (10.399977600104501,5.690669017502359) -- (4.342269681838108,4.571093836058975) -- cycle;
\fill[line width=2.pt,color=ccccff,fill=ccccff,fill opacity=0.10000000149011612] (13.600870506846984,7.5814290135781395) -- (10.399977600104501,5.690669017502359) -- (9.402490043119633,4.425199728790223) -- (8.36033886418022,3.159730440078086) -- (7.7945996527559664,2.013364143244739) -- (7.928590518619606,1.0754280821992728) -- (8.83675083169538,0.4352495008507801) -- cycle;
\draw [line width=1.2pt,color=ccccff] (0.1886207485436303,4.795149587647471)-- (4.342269681838108,4.571093836058975);
\draw [line width=1.2pt,color=ccccff] (4.342269681838108,4.571093836058975)-- (5.652134075740101,3.7093409453339845);
\draw [line width=1.2pt,color=ccccff] (5.652134075740101,3.7093409453339845)-- (6.841353064940594,2.795882881165495);
\draw [line width=1.2pt,color=ccccff] (6.841353064940594,2.795882881165495)-- (7.7945996527559664,2.013364143244739);
\draw [line width=1.2pt,color=ccccff] (7.7945996527559664,2.013364143244739)-- (7.928590518619606,1.0754280821992728);
\draw [line width=1.2pt,color=ccccff] (7.928590518619606,1.0754280821992728)-- (7.422402803134747,0.);
\draw [line width=1.2pt,color=ccccff] (7.422402803134747,0.)-- (0.1886207485436303,4.795149587647471);
\draw [line width=1.2pt,color=ccccff] (7.422402803134747,0.)-- (8.83675083169538,0.4352495008507801);
\draw [line width=1.2pt,color=ccccff] (8.83675083169538,0.4352495008507801)-- (7.928590518619606,1.0754280821992728);
\draw [line width=1.2pt,color=ccccff] (7.928590518619606,1.0754280821992728)-- (7.422402803134747,0.);
\draw [line width=1.2pt,color=ccccff] (6.841353064940594,2.795882881165495)-- (8.36033886418022,3.159730440078086);
\draw [line width=1.2pt,color=ccccff] (8.36033886418022,3.159730440078086)-- (7.7945996527559664,2.013364143244739);
\draw [line width=1.2pt,color=ccccff] (7.7945996527559664,2.013364143244739)-- (6.841353064940594,2.795882881165495);
\draw [line width=1.2pt,color=ccccff] (5.652134075740101,3.7093409453339845)-- (9.402490043119633,4.425199728790223);
\draw [line width=1.2pt,color=ccccff] (9.402490043119633,4.425199728790223)-- (8.36033886418022,3.159730440078086);
\draw [line width=1.2pt,color=ccccff] (8.36033886418022,3.159730440078086)-- (6.841353064940594,2.795882881165495);
\draw [line width=1.2pt,color=ccccff] (6.841353064940594,2.795882881165495)-- (5.652134075740101,3.7093409453339845);
\draw [line width=1.2pt,color=ccccff] (4.342269681838108,4.571093836058975)-- (10.399977600104501,5.690669017502359);
\draw [line width=1.2pt,color=ccccff] (10.399977600104501,5.690669017502359)-- (9.402490043119633,4.425199728790223);
\draw [line width=1.2pt,color=ccccff] (9.402490043119633,4.425199728790223)-- (5.652134075740101,3.7093409453339845);
\draw [line width=1.2pt,color=ccccff] (5.652134075740101,3.7093409453339845)-- (4.342269681838108,4.571093836058975);
\draw [line width=1.2pt,color=ccccff] (0.1886207485436303,4.795149587647471)-- (13.600870506846984,7.5814290135781395);
\draw [line width=1.2pt,color=ccccff] (13.600870506846984,7.5814290135781395)-- (10.399977600104501,5.690669017502359);
\draw [line width=1.2pt,color=ccccff] (10.399977600104501,5.690669017502359)-- (4.342269681838108,4.571093836058975);
\draw [line width=1.2pt,color=ccccff] (4.342269681838108,4.571093836058975)-- (0.1886207485436303,4.795149587647471);
\draw [line width=1.2pt,color=ccccff] (13.600870506846984,7.5814290135781395)-- (10.399977600104501,5.690669017502359);
\draw [line width=1.2pt,color=ccccff] (10.399977600104501,5.690669017502359)-- (9.402490043119633,4.425199728790223);
\draw [line width=1.2pt,color=ccccff] (9.402490043119633,4.425199728790223)-- (8.36033886418022,3.159730440078086);
\draw [line width=1.2pt,color=ccccff] (8.36033886418022,3.159730440078086)-- (7.7945996527559664,2.013364143244739);
\draw [line width=1.2pt,color=ccccff] (7.7945996527559664,2.013364143244739)-- (7.928590518619606,1.0754280821992728);
\draw [line width=1.2pt,color=ccccff] (7.928590518619606,1.0754280821992728)-- (8.83675083169538,0.4352495008507801);
\draw [line width=1.2pt,color=ccccff] (8.83675083169538,0.4352495008507801)-- (13.600870506846984,7.5814290135781395);
\draw [line width=1.2pt] (0.1886207485436303,4.795149587647471)-- (4.342269681838108,4.571093836058975);
\draw [line width=1.2pt] (4.342269681838108,4.571093836058975)-- (5.652134075740101,3.7093409453339845);
\draw [line width=1.2pt] (5.652134075740101,3.7093409453339845)-- (6.841353064940594,2.795882881165495);
\draw [line width=1.2pt] (6.841353064940594,2.795882881165495)-- (7.7945996527559664,2.013364143244739);
\draw [line width=1.2pt] (7.7945996527559664,2.013364143244739)-- (7.928590518619606,1.0754280821992728);
\draw [line width=1.2pt] (7.928590518619606,1.0754280821992728)-- (7.422402803134747,0.);
\draw [line width=1.2pt] (7.422402803134747,0.)-- (0.1886207485436303,4.795149587647471);
\draw [line width=1.2pt] (0.1886207485436303,4.795149587647471)-- (13.600870506846984,7.5814290135781395);
\draw [line width=1.2pt] (13.600870506846984,7.5814290135781395)-- (10.399977600104501,5.690669017502359);
\draw [line width=1.2pt] (10.399977600104501,5.690669017502359)-- (4.342269681838108,4.571093836058975);
\draw [line width=1.2pt] (5.652134075740101,3.7093409453339845)-- (9.402490043119633,4.425199728790223);
\draw [line width=1.2pt] (9.402490043119633,4.425199728790223)-- (10.399977600104501,5.690669017502359);
\draw [line width=1.2pt] (6.841353064940594,2.795882881165495)-- (8.36033886418022,3.159730440078086);
\draw [line width=1.2pt] (9.402490043119633,4.425199728790223)-- (8.36033886418022,3.159730440078086);
\draw [line width=1.2pt] (8.36033886418022,3.159730440078086)-- (7.7945996527559664,2.013364143244739);
\draw [line width=1.2pt] (7.422402803134747,0.)-- (8.83675083169538,0.4352495008507801);
\draw [line width=1.2pt] (8.83675083169538,0.4352495008507801)-- (7.928590518619606,1.0754280821992728);
\draw [line width=1.2pt] (8.83675083169538,0.4352495008507801)-- (13.600870506846984,7.5814290135781395);
\begin{scriptsize}
\draw [fill=black] (0.1886207485436303,4.795149587647471) circle (4.5pt);
\draw [fill=black] (8.83675083169538,0.4352495008507801) circle (4.5pt);
\draw [fill=black] (7.7945996527559664,2.013364143244739) circle (4.5pt);
\draw [fill=black] (6.841353064940594,2.795882881165495) circle (4.5pt);
\draw [fill=black] (8.36033886418022,3.159730440078086) circle (4.5pt);
\draw [fill=black] (5.652134075740101,3.7093409453339845) circle (4.5pt);
\draw [fill=black] (9.402490043119633,4.425199728790223) circle (4.5pt);
\draw [fill=black] (4.342269681838108,4.571093836058975) circle (4.5pt);
\draw [fill=black] (10.399977600104501,5.690669017502359) circle (4.5pt);
\draw [fill=black] (13.600870506846984,7.5814290135781395) circle (4.5pt);
\draw [fill=black] (7.422402803134747,0.) circle (4.5pt);
\draw [fill=black] (7.928590518619606,1.0754280821992728) circle (4.5pt);
\end{scriptsize}
\end{tikzpicture}
\end{subfigure}
\vspace{-0.3in}
\caption{K3 polytopes obtained from triangulations of minimal polytopes. \label{fig:K3minimal}}
\end{figure}

\section{Stability of quartic surfaces} 

In this section we prove Theorem \ref{thm:2}. By Corollary \ref{cor:stability},
it suffices to consider quartics $f$ such that $P = {\rm Newt}(f)$ is 
one of the $115$ minimal polytopes in Proposition~\ref{prop:minimal}.
We write $f$ as a homogeneous polynomial in four variables $x,y,z,w$.
The monomials in $f$ correspond to the points in $P \cap \mathbb{Z}^4 = \{m_1,m_2,\ldots,m_r\}$.
One of these is ${\bf p} = (1,1,1,1)$.
The $r$ monomials span a linear system inside
$\mathbb{HS}_{4,3} = H^0\big(\mathbb{P}^3, \mathcal{O}_{\mathbb{P}^3}(4)\big)$,
and we assume that $f$ is a general element of this linear system.
By Bertini's Theorem, the surface $V(f)$ is smooth outside the base locus
$V(m_1,m_2,\ldots,m_r)$ in $\PP^3$.

We begin with the following remark.
Suppose $u$ is a point of the base locus which is singular of multiplicity at least $s+1$ in all 
divisors $V(m_i)$, and $u$ has multiplicity exactly $s+1$ for at least one divisor. Then $u$ is a singular point of multiplicity exactly $s+1$ for the general element of the linear system. More precisely, let $u \in \mathbb{P}^3$ such that $u \in V(m_i)$ for every $i \in \{1, 2, \dots, r\}$ and such that 
\[ \frac{\partial^{|\alpha|} m_i}{\partial^{\alpha} x} (u) = 0 \ \ \text{for every multindex $\alpha$ such that $|\alpha| \leq s$, and every $i \in \{1, 2, \dots, r\}$}, \]
\[ \frac{\partial^{|\alpha|} m_j}{\partial^{\alpha} x} (u) \not = 0 \ \ \text{for at least one multindex $\alpha$ with $|\alpha| = s+1$, and $j \in \{1, 2, \dots, r\}$}. \]
The set of all $(\lambda_1, \lambda_2, \dots, \lambda_r) \in \mathbb{C}^r$ such that the point $u$ is singular of multiplicity $s+1$ in the surface $f = \lambda_1 m_1 + \dots + \lambda_r m_r$ is a Zariski open dense subset of $\mathbb{C}^r$. 

In what follows we establish the stability of quartics whose Newton polytope is minimal. For achieving this we will not use Theorem~\ref{thm:mumford}, for which a condition on the Newton polytope needs to be checked for an arbitrary change of coordinates. Instead,
we implement a computer-assisted verification capable of dealing with polynomials with general coefficients. Specifically, we first compute the singular points of the base locus defined by the monomials $m_1,\ldots,m_r$ 
of each minimal polytope. This does not depend on the choice of coefficients, which are only assumed to be nonzero. Then we move the coordinates to those of an affine neighborhood chosen such that the singular point is the origin. There, we perform changes of coordinates of the polynomials to reduce the singularity to a normal form and compare it with the ones characterized by Arnol\cprime d in \cite{Arn72}. These changes of coordinates are at worst polynomial, and finite in number. We keep track of what happens to the coefficient of each monomial during this procedure to make sure that no cancellation takes place. The finiteness of this process preserves the genericity assumption on the polynomial. We use this to deduce the stability of generic polynomials arising from minimal polytopes, and, consequently, from all the $15 \; 139$ reflexive polytopes of Corollary~\ref{cor:reflexive}. This is done working with polynomials having general coefficients, so the stability is checked in a dense open subset of the space of coefficients.

Our main result in this section is Algorithm \ref{al:singularities}. We implemented this algorithm in {\tt Python}. Our code takes care of performing all the calculations and manipulating
polynomials having one of the $115$ minimal polytopes as Newton polytope. Our source code is 
available on {\tt GitHub} at \url{https://github.com/gabrieleballetti/singularities}. 

In order to work with general coefficients, the coefficients of the monomials are defined 
to be the variables of a new polynomial ring. For a fixed minimal polytope $P$, let
$f = \lambda_1 m_1 + \cdots + \lambda_r m_r$ be the general polynomial
with $P \cap \ZZ^4 = \{m_1,\ldots,m_r\}$. Our script regards $f$ as an element of
$\QQ[\lambda_1,\ldots,\lambda_r][x,y,z,w]$.
Using $\QQ$ as a base field is sufficient as all manipulations we perform involve rational numbers. 
The coefficient vectors can be thought throughout as general elements in $(\CC^*)^r$.
 For some manipulations, we might require that the coefficients of some monomials do not vanish.
 These will be expressions in $\QQ[\lambda_1,\ldots,\lambda_r]$,
 so our results are valid over a Zariski dense subset in $\CC^r$.
 
For each minimal polytope $P$, we compute
the singular points of the base locus $V(m_1,\ldots,m_r)$.
We find that each singular point is a coordinate point.
Using the initial remark above, we conclude that
this is also the singular locus of the surface $V(f)$ where
$f = \lambda_1 m_1 + \cdots + \lambda_r m_r$ is generic.

\begin{proposition}
Given a minimal polytope $P$, the general surface with  Newton polytope $P$ has 
isolated singularities.  All singular points are coordinate points and they have multiplicity two.
\end{proposition}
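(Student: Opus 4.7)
The plan is to combine Bertini's theorem with the multiplicity remark stated just above the proposition and to exploit the fact that $(1,1,1,1)$ lies in the relative interior of $P$. This bypasses the enumeration over the $115$ minimal polytopes entirely, since the only property of $P$ actually needed is interiority of $(1,1,1,1)$.

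I would first let $L \subset \HS_{4,3}$ be the linear system spanned by $m_1, \ldots, m_r$ and $B = V(m_1, \ldots, m_r) \subset \PP^3$ its base locus. By Bertini, the general $V(f)$ is smooth outside $B$, so every singular point of $V(f)$ lies in $B$; because $(1,1,1,1) \in P$, the monomial $xyzw$ belongs to $L$, hence $B \subset V(xyzw)$. The multiplicity remark then yields: at a base point $u$, the general $f = \sum_i \lambda_i m_i$ has multiplicity $\mu(u) := \min_i \operatorname{mult}_u(m_i)$ at $u$, so $u$ is singular on $V(f)$ if and only if $\mu(u) \geq 2$, in which case the singularity has multiplicity $\mu(u)$.

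The decisive step is a simple half-space observation. For $u \in V(xyzw)$, write $I(u) = \{\,j : u_j = 0\,\}$, with $1 \leq |I(u)| \leq 3$; the monomial $m = x_0^{a_0} x_1^{a_1} x_2^{a_2} x_3^{a_3}$ has $\operatorname{mult}_u(m) = \sum_{j \in I(u)} a_j$, and evaluating at $(1,1,1,1) \in P \cap \ZZ^4$ gives $|I(u)|$. Thus $\mu(u) \leq |I(u)|$, and the equality $\mu(u) = |I(u)|$ would force $P$ to lie in the closed half-space $\{\sum_{j \in I(u)} a_j \geq |I(u)|\}$ whose bounding hyperplane contains $(1,1,1,1)$. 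This is impossible because $(1,1,1,1)$, being a relative interior point of $P$, cannot lie on any supporting hyperplane. Hence $\mu(u) \leq |I(u)| - 1$ for every $u \in V(xyzw)$. Applied with $|I(u)| \leq 2$ this gives $\mu(u) \leq 1$, ruling out singular points off the four coordinate points $e_0, \ldots, e_3$; applied at $e_j$ with $|I(e_j)| = 3$ it gives $\mu(e_j) \leq 2$, so any singular coordinate point has multiplicity exactly two.

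All three assertions of the proposition---isolatedness of the singular locus, location at coordinate points, and multiplicity two---then follow immediately. The main (minor) obstacle I anticipate is a careful invocation of Bertini when the linear system has a non-empty base locus; one uses the standard fact that the generic member of a linear system is smooth away from $B$, which is exactly what is needed. As a pleasant byproduct, the argument works for every canonical Newton polytope of Theorem~\ref{thm:1}(a), not just the $115$ minimal ones.
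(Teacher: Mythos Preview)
Your argument has a genuine gap at the step ``$u$ is singular on $V(f)$ if and only if $\mu(u)\geq 2$''. The multiplicity remark in the paper is a \emph{pointwise} statement: for each fixed $u$ in the base locus, there is a Zariski-open set of $\lambda$'s for which $\operatorname{mult}_u(f_\lambda)=\mu(u)$. When the locus $\{u:\mu(u)=1\}$ is positive-dimensional, these open conditions need not have a common nonempty intersection. In other words, $\mu(u)\leq 1$ for every $u$ on a coordinate line does \emph{not} force the general $V(f)$ to be smooth along that whole line. Bertini only controls points outside the base locus, so it does not rescue this step.

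Here is an explicit reflexive counterexample to your ``byproduct'' claim. Take
\[
P=\conv\{(3,0,0,1),(0,3,0,1),(0,0,4,0),(0,0,0,4)\}\subset 4\Delta_3,
\]
which has $(1,1,1,1)$ as its unique interior lattice point and is reflexive. One checks that no lattice point of $P$ satisfies $a_2+a_3=0$, and the lattice points with $a_2+a_3=1$ are exactly $(3,0,0,1),(2,1,0,1),(1,2,0,1),(0,3,0,1)$, all with $a_2=0$. Hence along the line $L=\{x_2=x_3=0\}$ one has $\mu(u)=1$ on the open stratum, yet the gradient of the general $f$ at $[s:t:0:0]$ is $(0,0,0,\,\lambda_a s^3+\lambda_b s^2t+\lambda_c st^2+\lambda_d t^3)$. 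For general coefficients this cubic has three simple roots with $st\neq 0$, giving three singular points of $V(f)$ that are not coordinate points. So the proposition fails for this canonical polytope, and your half-space inequality $\mu(u)\leq |I(u)|-1$, while correct, is not strong enough to rule out singularities along coordinate lines contained in the base locus.

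The paper's proof avoids this by a direct computation over the $115$ minimal polytopes: for each $P$ it determines the singular points of the base locus explicitly and verifies that they are coordinate points. That case-by-case check is what actually certifies the proposition; a uniform convexity argument of the kind you propose cannot replace it.
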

 
In order to conclude the stability of the surfaces, this information is not enough.
We need to understand whether they are rational double points or not, according to
Shah's classification \cite{Shah}. 

In what follows we shall use the classification of hypersurface singularities due to
Arnol\cprime d \cite{Arn72, Arn74}.  We also refer to the monograph
by Greuel et al.~\cite{GLS}. For our quartic surfaces, the singular points are
\begin{equation}
\label{eq:singularpoints}
[1,0,0,0], \; [0,1,0,0] , \; [0,0,1,0], \; \text{or} \; [0,0,0,1]  \,\,\, \, {\rm in} \, \,\, \PP^3. 
\end{equation}
When analyzing a singular point $u$,
 we always work in an affine neighborhood, where $u$ is the origin.
 We do this by dehomogenizing $f$ with respect to     $x, y, z$ or $w$.
 After relabeling the variables, we regard our polynomials as elements 
 in the ring $R = \mathbb{C}\{x, y, z\}$ of convergent power series.  This ring is local,
 with maximal ideal  $   \mathfrak{m} = \langle x,y,z \rangle$.
 If $f \in \mathfrak{m}^r$, then $f$ has a zero of order $r$ at the origin.

\begin{definition}
Let $f,g \in R$. We define the \emph{$k$-jet} $J_k \, f$ as the image of $f$ in ${R}/{\mathfrak{m}^k}$.
We say that $f$ is \emph{right equivalent} to $g$, denoted $f \sim g$, if there exists an automorphism $\varphi$ of $R$ such that $\varphi(f) =g$. We say that $f$ is
 \emph{right $k$-determined} if  $f \sim h$ for each $h \in R$ with $J_k \, f = J_k \, h$.
\end{definition}

The following classical result gives a sufficient condition for 
a singularity to be $k$-determined.  For a reference
see \cite[Theorem 2.23]{GLS} or \cite[Lemma 3.1--3.2]{Arn72}.

\begin{theorem}[Arnol\cprime d's Finite Determinacy Theorem] Let $f \in \mathfrak{m}$. Then $f$ is right $k$-determined~if 
\[\mathfrak{m}^{k+1}\,\, \subseteq \,\,\mathfrak{m}^2 \cdot \Big \langle \frac{\partial f}{\partial x}, \frac{\partial f}{\partial y}, \frac{\partial f}{\partial z}\Big\rangle. \]
\end{theorem}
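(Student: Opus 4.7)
The strategy is the homotopy method (Moser's trick) in the local ring $R = \mathbb{C}\{x,y,z\}$. Given $h \in R$ with $J_k f = J_k h$, set $g = h - f \in \mathfrak{m}^{k+1}$ and interpolate $f_t = f + t g$ for $t \in [0, 1]$. We seek a family of $R$-automorphisms $\sigma_t$ with $\sigma_0 = \mathrm{id}$ and $\sigma_t(f) = f_t$; at $t = 1$ this exhibits $\sigma_1(f) = h$, proving $f \sim h$. Realizing $\sigma_t$ as the flow of a time-dependent vector field $X_t = \sum_i \xi_i(t, x)\,\partial_{x_i}$ and differentiating the defining equation reduces the problem to solving, at each $t$,
\[
g \,=\, \sum_i \xi_i(t, x)\, \partial_{x_i} f_t(x), \qquad \xi_i(t,\cdot) \in \mathfrak{m}^2.
\]
The membership $\xi_i \in \mathfrak{m}^2$ ensures that $X_t$ vanishes to second order at the origin, so its flow exists as a local automorphism of $R$ fixing the origin with identity linear part.

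The algebraic heart of the proof is a Nakayama-type propagation of the determinacy hypothesis along the homotopy: $\mathfrak{m}^{k+1} \subseteq \mathfrak{m}^2 \langle \partial_{x_i} f_t \rangle$ for every $t \in [0, 1]$. Indeed, since $g \in \mathfrak{m}^{k+1}$ the partials $\partial_{x_i} g$ lie in $\mathfrak{m}^k$, hence $\partial_{x_i} f_t - \partial_{x_i} f \in \mathfrak{m}^k$ and therefore $\mathfrak{m}^2 \langle \partial_{x_i} f \rangle \subseteq \mathfrak{m}^2 \langle \partial_{x_i} f_t \rangle + \mathfrak{m}^{k+2}$. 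Substituting into the hypothesis yields $\mathfrak{m}^{k+1} \subseteq \mathfrak{m}^2 \langle \partial_{x_i} f_t \rangle + \mathfrak{m} \cdot \mathfrak{m}^{k+1}$, and Nakayama's lemma applied to the finitely generated ideal $\mathfrak{m}^{k+1}$ of the Noetherian local ring $R$ delivers the inclusion. Consequently the division above admits a solution $\xi_i(t, \cdot) \in \mathfrak{m}^2$ for every $t$.

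The main obstacle is not the existence of $\xi_i$ at each $t$ but the fact that this division is grossly non-unique, so one must select the $\xi_i(t, x)$ to vary coherently with $t$ before $X_t$ can be integrated to a flow. In the analytic setting this is handled by realizing the division as a continuous $R$-linear map between Banach spaces of convergent power series on a fixed polydisc and extracting a bounded right-inverse via the Weierstrass--Malgrange division theorem; the resulting $\xi_i(t, x)$ are then real-analytic in $t$, and the flow of $X_t$ is defined uniformly on a neighborhood of the origin for all $t \in [0, 1]$. A purely formal variant instead expands $\xi_i(t, x) = \sum_{j \geq 0} \xi_i^{(j)}(x)\, t^j$ and solves order by order in $t$: the coefficient equation at order $t^j$ has right-hand side in $\mathfrak{m}^{k+1}$ by induction (using $\partial_{x_i} g \in \mathfrak{m}^k$), hence is solvable using the hypothesis with $\xi_i^{(j)} \in \mathfrak{m}^2$. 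Either route yields an automorphism $\sigma_1$ with $\sigma_1(f) = h$; for the complete analytic argument we refer to \cite[Theorem~2.23]{GLS} and \cite[\S 3]{Arn72}.
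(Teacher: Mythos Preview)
The paper does not prove this theorem: it is stated as a classical result with a pointer to \cite[Theorem~2.23]{GLS} and \cite[Lemma~3.1--3.2]{Arn72}, and then used as a black box in the singularity analysis. There is therefore no ``paper's own proof'' to compare against.

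Your sketch is the standard homotopy/Moser argument, and it is essentially the proof given in the references the paper cites (in particular \cite[Theorem~2.23]{GLS}). The Nakayama step propagating the inclusion $\mathfrak{m}^{k+1}\subseteq\mathfrak{m}^2\langle\partial_i f_t\rangle$ along the family is correct as you wrote it. Two small remarks. First, your convention ``$\sigma_t(f)=f_t$ with $\sigma_t$ the flow of $X_t$'' requires the flow equation $\tfrac{d}{dt}\sigma_t = D_t\circ\sigma_t$ (derivation applied after the automorphism) to land on $\sum_i\xi_i\,\partial_{x_i}f_t=g$; this is fine, but it is worth being explicit since the other ordering gives an equation in $\partial_{x_i}f$ composed with $\sigma_t$. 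Second, your ``purely formal variant'' only produces a \emph{formal} automorphism, which establishes right equivalence in $\mathbb{C}[[x,y,z]]$ but not directly in $R=\mathbb{C}\{x,y,z\}$; convergence still has to be supplied (e.g.\ via Artin approximation or, as you indicate, via the analytic division theorem giving analytic $\xi_i(t,x)$ on a fixed polydisc). You acknowledge this by deferring the complete analytic argument to the same references, so the sketch is honest; just be aware that the formal route alone does not finish the proof in the convergent category.
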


Arnol\cprime d \cite{Arn72} classifies the normal forms of a function in the neighborhood of a simple critical point:

\begin{theorem}[Arnol\cprime d] 
\label{thm:class_and} 
Each rational double point is right equivalent to one of the following normal~forms: \hfill
(These are indexed by root systems, and we refer to them in Table \ref{tab:sing}.)
\begin{itemize}
\item[$A_k$:] $x^2+y^2+ z^{k+1}$, 
\item[$D_k$:] $x^2 + y^2z + z^{k-1}$, 
\item[$E_6$:] $x^2 + y^3 + z^4$, 
\item[$E_7$:] $x^2 + y^3 + yz^3$, 
\item[$E_8$:] $x^2 + y^3 + z^5$. 
\end{itemize}
The normal forms $A_k$, $D_k$, $E_6$, $E_7$ and $E_8$ are respectively $k+1$, $k-1$, $4$, $4$ and $5$-determined.
\end{theorem}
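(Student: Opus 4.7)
The plan is to combine the Finite Determinacy Theorem just stated with an iterative normal-form reduction that exploits the hypothesis that rational double points have multiplicity exactly $2$. First I would invoke finite determinacy to replace $f\in\frm\subset R$ by a polynomial of bounded degree, so that all subsequent coordinate changes can be taken polynomial and convergence of the formal series is automatic. Next I would apply the \emph{Splitting Lemma} (a parametrised Morse Lemma) to diagonalise the quadratic part of $f$ and obtain
\[ f\,\sim\,x_1^2+\cdots+x_r^2+g(x_{r+1},\ldots,x_3), \]
where $r=\mathrm{rank}(\mathrm{Hess}_0\,f)$ and $g\in\frm^3$. Since $f$ has multiplicity $2$ we must have $r\geq 1$.

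The cases $r=3$ and $r=2$ are handled in two lines. For $r=3$ we obtain $f\sim x^2+y^2+z^2$, which after a linear change of variables is $A_1$. For $r=2$ we obtain $f\sim x^2+y^2+g(z)$ with $g$ of order $k+1\geq 2$; finite determinacy forces $g$ to be a unit times $z^{k+1}$, and rescaling $z$ gives the $A_k$ form. The substantive work lies in the corank-$2$ case $r=1$, where $f\sim x^2+g(y,z)$ reduces us to classifying plane curve singularities $g$ of multiplicity at least $3$.

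Here I would stratify by the cubic term $g_3(y,z)$, viewed as a binary form. If $g_3$ has three distinct roots, a linear change of coordinates brings it to $y^2z-z^3$, after which induction on the Milnor number absorbs the higher-order perturbation into $y^2z+z^{k-1}$, yielding the $D_k$ normal form; a double-plus-simple root factorisation is reduced to the same target by a further coordinate change. The delicate subcase is $g_3=y^3$: one must inspect $g_4$ and higher terms and attempt, by successive changes of coordinates, to transform $g$ into one of $y^3+z^4$, $y^3+yz^3$, or $y^3+z^5$, giving $E_6$, $E_7$, or $E_8$. At each step the Jacobian ideal $\langle\partial_y g,\partial_z g\rangle$ determines which monomials are removable, and if none of the three $E_k$ targets is reachable then the Milnor number becomes infinite and the singularity is not a rational double point.

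The main obstacle is precisely this $g_3=y^3$ branch, where one must show that exactly three rigid forms survive and that nothing beyond them qualifies as a rational double point. This rests on a careful Tschirnhaus-style completion of the cube together with the weighted homogeneous filtration introduced by Arnol\cprime d, which organises the remaining monomials into finitely many cells so that the classification is exhaustive. Once the list is in hand, the stated determinacy orders ($k+1$ for $A_k$, $k-1$ for $D_k$, $4$ for $E_6$ and $E_7$, and $5$ for $E_8$) follow from direct calculation: one writes down the Jacobian ideal of each normal form and verifies the inclusion $\frm^{k+1}\subseteq\frm^2\cdot J(f)$ demanded by the Finite Determinacy Theorem.
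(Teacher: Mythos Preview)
The paper does not actually prove this theorem: it is stated as a classical result of Arnol\cprime d, with references to \cite{Arn72,Arn74} and \cite{GLS}, and no proof is supplied. What follows Theorem~\ref{thm:class_and} in the paper is not a proof of the classification, but an \emph{application} of it: the ``Rank~3 / Rank~2 / Rank~1'' subsections take the specific quartics arising from the $115$ minimal polytopes and, using the Splitting Lemma and the determinacy orders quoted in the theorem, identify each singular point as one of the listed ADE types. So there is no ``paper's own proof'' to compare against.

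That said, your sketch follows exactly the standard route taken in Arnol\cprime d's original work and in \cite{GLS}: Splitting Lemma, corank stratification, then classification of the residual $g\in\frm^3$ in the remaining variables. The structure is correct and matches how the paper itself organises its downstream case analysis. One point is slightly garbled: when $g_3$ has three \emph{distinct} linear factors you get precisely $D_4$, not a general $D_k$; it is the case of a double root ($g_3\sim y^2z$) where the higher-order terms in $z$ determine $D_k$ for $k\geq 5$. Your sentence conflates these two subcases. Apart from that, the outline and the verification of the determinacy orders via $\frm^{k+1}\subseteq\frm^2\cdot J(f)$ are the expected arguments.
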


We prove Theorem~\ref{thm:2} by checking that each 
quartic $f$ with general coefficients $\lambda_1,\ldots,\lambda_r$ as above,
with Newton polytope from Proposition \ref{prop:minimal},
is right equivalent to one the forms listed above.

\smallskip

We next recall the \emph{splitting lemma}, also known as \emph{generalized Morse lemma};
see \cite[Lemma 4.1]{Arn72} and \cite[Theorem 2.47]{GLS}. We quickly sketch 
the proof given in \cite{GLS}, as the method will be  essential in our algorithm for determining  stability. 

\begin{theorem}[Arnol\cprime d's Splitting Lemma] 
Let $f \in \mathfrak{m}^2 \subset \mathbb{C}\{x_1, x_2, \dots, x_n\}$. If the Hessian matrix $H(f)(0)$ at the point $0$ has rank $k$, then 
\[
f  \,\,\sim \,\, x_1^2 + x_2^2 + \cdots +x_k^2 + g(x_{k+1}, \dots, x_n), 
\]
with $g \in \mathfrak{m}^3$, uniquely determined up to right equivalence. 
\end{theorem}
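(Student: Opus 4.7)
My plan is to follow the two-step strategy that is classical for the splitting lemma: first normalize the quadratic part by linear algebra, then use an analytic coordinate change (essentially a parametric Morse argument) to absorb all higher-order interactions between the non-degenerate and degenerate blocks.

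\textbf{Step 1: Diagonalizing the Hessian.} Since $H(f)(0)$ is a complex symmetric matrix of rank $k$, I can choose a linear change of coordinates on $\mathbb{C}\{x_1,\dots,x_n\}$ that puts the $2$-jet of $f$ in the form $x_1^2+\cdots+x_k^2$. After this, I can write
\[
f(x_1,\dots,x_n) \,=\, \sum_{i=1}^{k} x_i^2 \,+\, r(x_1,\dots,x_n),
\]
with $r\in\mathfrak{m}^3$. The work is then to eliminate the dependence of $r$ on $x_1,\dots,x_k$ modulo the squares already present.

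\textbf{Step 2: Completing the squares analytically.} Writing $y=(x_1,\dots,x_k)$ and $z=(x_{k+1},\dots,x_n)$, I look for an automorphism of the form $\varphi(y,z)=(y+\psi(y,z),\,z)$ with $\psi\in\mathfrak{m}^2$ valued in $\mathbb{C}^k$ such that
\[
f(\varphi(y,z)) \,=\, \sum_{i=1}^{k}(y_i+\psi_i(y,z))^2 + r(y+\psi(y,z),z) \,=\, \sum_{i=1}^{k}y_i^2 + g(z).
\]
Expanding and matching terms, this is equivalent to solving the analytic implicit equation
\[
\partial_{y_i} f\bigl(y+\psi(y,z),z\bigr) \,=\, 0 \quad\text{for } i=1,\dots,k,
\]
in the variables $\psi$. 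The Jacobian of the left-hand side in $\psi$ at $(y,z)=0$ is $2\,\mathrm{Id}_k$, which is invertible, so the analytic implicit function theorem produces a unique $\psi\in\mathfrak{m}^2$ solving this system. Substituting shows that $f\circ\varphi$ is independent of $y$, giving the asserted normal form with $g(z)=f(\psi(0,z),z)\in\mathfrak{m}^3$.

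\textbf{Step 3: Uniqueness of $g$ up to right equivalence.} Suppose $\sum y_i^2+g_1(z)\sim \sum y_i^2+g_2(z)$ via some automorphism $\Phi$ of $\mathbb{C}\{y,z\}$. Looking at the $2$-jet forces the linear part of $\Phi$ to preserve the splitting $\mathbb{C}^k\oplus\mathbb{C}^{n-k}$ up to an orthogonal transformation in the $y$-block; applying Step~2 once more to straighten out the induced automorphism shows that $\Phi$ may be taken to act only on $z$, giving $g_1\sim g_2$.

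\textbf{Main obstacle.} The linear-algebraic Step~1 is routine, and uniqueness in Step~3 is essentially bookkeeping once Step~2 is in hand. The real content is Step~2: one must guarantee that the coordinate change $\psi$ exists as a convergent power series, not merely a formal one. This is precisely where the analytic implicit function theorem (applied with parameters $z$ to a system whose Jacobian in $\psi$ is the invertible matrix $2\,\mathrm{Id}_k$ coming from the non-degenerate block of the Hessian) does the heavy lifting. Equivalently, one can formulate Step~2 as a Newton-type iteration and verify convergence in $\mathbb{C}\{x_1,\dots,x_n\}$; either way, the non-degeneracy of the first $k$ directions is exactly what is needed, and this is the only delicate point of the argument.
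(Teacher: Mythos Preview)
Your Step~1 and the sketch of Step~3 are fine, but Step~2 contains a genuine error. The equation you propose to solve,
\[
\partial_{y_i} f\bigl(y+\psi(y,z),\,z\bigr)=0\quad (i=1,\dots,k),
\]
says that for every $(y,z)$ the point $y+\psi(y,z)$ is a critical point of $f(\cdot,z)$. Since the Hessian of $f$ in the $y$-directions is non-degenerate, that critical point is the \emph{unique} $\phi(z)$ given by the implicit function theorem; hence $y+\psi(y,z)=\phi(z)$ is independent of $y$. Consequently $\psi(y,z)=\phi(z)-y$ has a nonzero linear part (so $\psi\notin\mathfrak m^{2}$) and, more seriously, $\varphi(y,z)=(\phi(z),z)$ is not an automorphism at all: it collapses the $y$-directions. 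Your sentence ``Substituting shows that $f\circ\varphi$ is independent of $y$'' is literally true, but for the wrong reason, and it certainly does not give $f\circ\varphi=\sum y_i^{2}+g(z)$.

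The implicit-function approach can be made to work, but it needs two stages: first shift by the critical locus, $y\mapsto y+\phi(z)$, to kill the terms linear in $y$; then write the result as $\sum_{i,j} y_i y_j\,h_{ij}(y,z)$ with $(h_{ij}(0))$ invertible and diagonalize this analytic symmetric matrix by a further change in $y$ (an analytic square root). You have essentially carried out only the first half. By contrast, the paper (following \cite{GLS}) avoids this by an iterative completion of squares: writing $f=\sum_{i\le k} x_i^{2}+f_{\ge 3}(x_{k+1},\dots,x_n)+\sum_{i\le k} x_i\,g_i$ with $g_i\in\mathfrak m^{2}$, one substitutes $x_i\mapsto x_i-\tfrac12 g_i$ to push the mixed terms one order higher, and repeats. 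Either route is standard, but your single implicit-function step as written does not close the gap.
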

\begin{proof}
By Jacobi's Theorem, we can assume that the $2$-jet of $f$ is 
$\, J_2 \, f = x_1^2 + x_2^2 + \cdots +x_k^2$. Hence
\[
f \,\,=\,\, x_1^2 + x_2^2 + \cdots +x_k^2 \,+\, f_{\geq 3}(x_{k+1}, \dots, x_n) \,+\, \sum_{i=1}^k x_i \, g_i(x_1, x_2, \dots, x_n),
\]
with $f_{\geq 3} \in \mathfrak{m}^3$ and $g_i \in \mathfrak{m}^2$. 
We apply the following change of coordinates:
\begin{equation}
\label{eq:transformation}
\begin{array}{ccccc}
 x_i & \mapsto & x_i - \frac{1}{2}g_i(x_1, x_2, \dots, x_n) \quad && \textrm{for} \ 1\leq i \leq k, \\
 x_i & \mapsto  & x_i \qquad \qquad && \textrm{for} \ i>k.
\end{array}
\end{equation}
We get
\[
f \,\,=\,\, x_1^2 + x_2^2 + \cdots +x_k^2\, + \,
f_{\geq 3}(x_{k+1}, \dots, x_n) \,+\,
 f_{\geq 4}(x_{k+1}, \dots, x_n) \,+ \, \sum_{i=1}^k x_i \, h_i(x_1, x_2, \dots, x_n),\]
with $f_{\geq 4} \in \mathfrak{m}^4$ and $h_i \in \mathfrak{m}^3$. The statement follows by recursively repeating the same argument. 
\end{proof}

Algorithm \ref{al:singularities} is used to to classify the singularities of our surfaces. It begins by looking at the $2$-jets of the polynomials that define them. In particular we look at the rank of their Hessian. We always  apply a linear transformation which transforms the $2$-jet in the form described by Jacobi's Theorem. 
\begin{algorithm} \label{al:singularities}
\hspace{1pt}\\
INPUT: A quartic polynomial $f$ with general coefficients and  Newton polytope from Proposition \ref{prop:minimal}, and a singular point $u$ on the surface defined by $f$. 
\hspace{1pt}\\
OUTPUT: The type of singularity at the point $u$ according to Arnol\cprime d's classification. 
\begin{enumerate}[1.]
\item Dehomogenize the polynomial $f$, so that the singular point $u$ is the origin. 
\item   Compute the rank of the Hessian matrix $H(f)(0)$. Depending on the rank perform the following steps. 
\item Rank 3: Apply a  linear transformation which transforms the $2$-jet in the form described by Jacobi's Theorem, 
\[ J_2 f \,\, = \,\, x^2 +y^2 +z^2.\]
Since the normal form $A_1$ is $2$-determined, conclude that this singularity is of type $A_1$.
\item Rank 2: Apply a linear transformation to  transform the 2-jet to the form 
\[ J_2 \, f \,\, = \,\,x^2 + y^2 .\] 
\begin{enumerate}[4.1]
\item If the part of degree $3$ contains the monomial $z^3$, by the $(k+1)$-determinacy of $A_k$,  conclude that the singularity is of type $A_2$. 
\item If $f$ contains the monomials $z^4$, $x  z^2$ or $y  z^2$, then, when 
applying the transformation (\ref{eq:transformation}),  the monomial $z^4$ is obtained. Deduce that the singularity is of type $A_3$.
\item If $f$ contains $x z^3$ or $y  z^3$, then, when 
applying the transformation (\ref{eq:transformation}), the monomial $z^6$ is obtained. The singularity is of type $A_5$.
\end{enumerate}
\item Rank 1: Transform the $2$-jet to the rank one quadric  $\, J_2 \, f = x^2$, and apply the transformation (\ref{eq:transformation}) in the proof of the Splitting Lemma. The result is
\[ 
f \,=\, x^2 + f_3(y,z) + f_4(y,z)  + f_5(y,z) + xg(x,y,z).\]
Here, each term in $g$ has degree at least $3$ and $f_i$ is a polynomial
of degree $i$ that depends only on the variables $y$ and $z$.
 Iterating again the steps in the proof of the Splitting Lemma will not change the $f_i$. 
 In fact, it will only produce monomials in the variables $y$ and $z$ of higher degree. 
 Let $f' = f_3(y,z) + f_4(y,z)  + f_5(y,z)$. Apply the linear automorphism described in \cite[Proposition 2.50]{GLS}. Look at the transformed polynomial $f_3$. 
\begin{enumerate}[5.1]
\item If  $f_3 = yz(y+z)$, thanks to \cite[Theorem 2.51]{GLS} conclude that the singularity is of type $D_4$.  
\item If $f_3 = y^2z$,  argue as in the proof of \cite[Theorem 2.51]{GLS}, using the fact that $D_k$ is ($k-1$)-determined.  
Write the 4-jet of $f'$ as follows,
with $\alpha, \beta \in \CC$ and $h \in \mathfrak{m}^2$:
\[J_4 \,f '\,\, = \,\, y^2z + f_4(y,z)\,\, = \,\,x^2 + y^2z + \alpha z^4 + \beta yz^3 + y^2 \cdot h(y,z),\]
 If $\alpha \not = 0$ then, as in the proof of \cite[Theorem 2.51]{GLS}, conclude that the type 
 is $D_5$. If $\alpha =0$, remark that the polynomials in our list  all contain 
 either $y z^3$ or $yz^4$. In the first situation, after applying the Tschirnhaus transformation described in the proof of the aforementioned theorem, conclude that the  singularity of type $D_6$:
\[
J_5 \, f' \,\,=\,\, y^2 z + \alpha' z^5.
\]
In the second situation, if $yz^4$ occurs, again after applying a Tschirnhaus transformation,
\[
J_6 \, f'\, \,= \,\,y^2 z + \alpha' z^6.
\]
From this form of the $6$-jet conclude that the singularity is of type $D_7$. 
\item If $f_3 = y^3$, use \cite[Theorem 2.53]{GLS}. In this case, the $4$-jet of $f'$ equals
\[
J_4 \, f' \,\,=\,\, y^3 + f_4(y,z).
\]
Remark that $f$ in our list of polynomials always contains the monomials $y^2z$, $yz^2$ or $yz^3$, so condition (b) of \cite[Theorem 2.53]{GLS} is satisfied. Therefore  conclude that the singularity is of type $E_6$, $E_7$ or $E_8$. In order to determine the type write $f$ as
\[
f' \,\,=\,\, y^3 + \alpha z^4 + \beta y z^3  + y^2 \cdot h(y,z),
\]
with $h \in \mathfrak{m}^2$. By arguing as in the proof of the mentioned theorem,  conclude that if $\alpha \not=0$, then the singularity is $E_6$, while if $\alpha = 0$, the singularity is $E_7$.
\end{enumerate}
\end{enumerate}
\end{algorithm}

\begin{table}
\begin{tabular}{|c|c||c|c||c|c|} 
\hline
type & tot. & type & tot. & type & tot. \\
\hline
$A_1$ & 22	& $D_4$ & 14	& $E_6$ & 22	\\
$A_2$ & 32	& $D_5$ & 26	& $E_7$ & 9	\\
$A_3$ & 127	& $D_6$ & 12	& & \\
$A_5$ & 58	& $D_7$ & 10	& & \\
\hline
\end{tabular}
\caption{\label{tab:sing}List of singularities of quartic surfaces arising from
the $115$ minimal polytopes in Proposition \ref{prop:minimal}. 
For each type, we list the total number of occurrences.}
\end{table}

\smallskip

\begin{proof}[Proof of Theorem~\ref{thm:2}]
We apply the analysis described above to each of the $115$ minimal polytopes in 
Proposition~\ref{prop:minimal}, and then to each coordinate point (\ref{eq:singularpoints})
that is singular in the base locus $V(m_1,\ldots,m_r)$.
Each singularity turns out to be a rational double point. 
Algorithm \ref{al:singularities} determines the singularity type according to Arnol\cprime d's classification in
Theorem~\ref{thm:class_and}. The results are summarized in Table \ref{tab:sing}.

We next apply Shah's characterization of stable quartic surfaces \cite{Mum77, Shah},
as presented in Section~\ref{sec:four}. Since all singularities are rational double points,
we conclude that all our $115$ quartic surfaces are stable
in the sense of Geometric Invariant Theory. 
Theorem~\ref{thm:2} now follows from Corollary~\ref{cor:stability}.
\end{proof}
		
\bibliographystyle{plain}

\end{document}